\newif\ifarxiv
\newif\ifsupplement
\newtheorem{theorem}{Theorem}[section]
\newtheorem{lemma}[theorem]{Lemma}
\newtheorem{conjecture}{Conjecture}
\newtheorem{definition}{Definition}
\newtheorem{proposition}[theorem]{Proposition}
\newtheorem{claim}[theorem]{Claim}
\newtheorem{question}{Question}
\tikzset{vtx/.style={inner sep=1.7pt, outer sep=0pt, circle, fill=black,draw}}
\newcommand{\vc}[1]{\ensuremath{\vcenter{\hbox{#1}}}}
\tikzset{flag_pic/.style={scale=1}}  
\tikzset{unlabeled_vertex/.style={inner sep=1.7pt, outer sep=0pt, circle, fill}}
\tikzset{labeled_vertex/.style={inner sep=3pt, outer sep=0pt, rectangle, fill=white, draw=black}}
\tikzset{edge_color0/.style={color=black,line width=1.2pt,opacity=0.5,dashed}}
\tikzset{edge_color1/.style={color=red,  line width=1.2pt,opacity=0}} 
\tikzset{edge_color2/.style={color=black, line width=1.2pt,opacity=1}}
\tikzset{edge_color3/.style={color=blue,line width=1.5pt,densely dotted}}
\tikzset{edge_color_blue/.style={color=blue,line width=1.5pt,densely dotted}}
\tikzset{edge_color4/.style={color=red, line width=1pt}}
\tikzset{edge_color_red/.style={color=red, line width=1pt}}
\tikzset{edge_color5/.style={color=magenta,  line width=1.4pt,dash pattern = {on 4pt off 2pt on 1pt off 2pt}}}
\tikzset{edge_color6/.style={color=gray, line width=1.4pt,densely dashed}}
\tikzset{edge_color7/.style={color=cyan, line width=1.4pt,dash pattern = {on 2pt off 3pt}}}
\tikzset{edge_color8/.style={color=gray, line width=1.2pt}}
\tikzset{edge_color9/.style={color=gray, dotted, line width=1.2pt}}
\tikzset{edge_color10/.style={color=gray, dashed, line width=1.2pt}}
\tikzset{edge_color11/.style={color=pink, dashed, line width=1.2pt}}
\tikzset{edge_colorroot/.style={color=red, line width=1.7pt}}
\tikzset{edge_thin/.style={color=black}}
\tikzset{edge_hidden/.style={color=black,dotted,opacity=0}}
\tikzset{vertex_color0/.style={inner sep=1.7pt, outer sep=0pt, draw, circle, fill=black}}
\tikzset{vertex_color1/.style={inner sep=1.7pt, outer sep=0pt, draw, circle, fill=red}}
\tikzset{vertex_color2/.style={inner sep=1.7pt, outer sep=0pt, draw, circle, fill=blue}}
\tikzset{vertex_color3/.style={inner sep=1.7pt, outer sep=0pt, draw, circle, fill=green!80!black}}
\tikzset{vertex_color4/.style={inner sep=1.7pt, outer sep=0pt, draw, circle, fill=pink}}
\tikzset{vertex_color5/.style={inner sep=1.7pt, outer sep=0pt, draw, circle, fill=gray,label=below:{$5$}}}
\tikzset{vertex_color6/.style={inner sep=1.7pt, outer sep=0pt, draw, circle, fill=gray,label=below:{$6$}}}
\tikzset{vertex_color7/.style={inner sep=1.7pt, outer sep=0pt, draw, circle, fill=gray,label=below:{$7$}}}
\tikzset{vertex_color8/.style={inner sep=1.7pt, outer sep=0pt, draw, circle, fill=gray,label=below:{$8$}}}
\tikzset{vertex_color9/.style={inner sep=1.7pt, outer sep=0pt, draw, circle, fill=gray,label=below:{$9$}}}
\tikzset{vertex_color10/.style={inner sep=1.7pt, outer sep=0pt, draw, circle, fill=gray,label=below:{$10$}}}
\tikzset{vertex_color11/.style={inner sep=1.7pt, outer sep=0pt, draw, circle, fill=gray,label=below:{$11$}}}
\tikzset{vertex_color12/.style={inner sep=1.7pt, outer sep=0pt, draw, circle, fill=gray,label=below:{$12$}}}
\tikzset{vertex_color13/.style={inner sep=1.7pt, outer sep=0pt, draw, circle, fill=gray,label=below:{$13$}}}
\tikzset{vertex_color14/.style={inner sep=1.7pt, outer sep=0pt, draw, circle, fill=gray,label=below:{$14$}}}
\tikzset{labeled_vertex_color0/.style={inner sep=2.2pt, outer sep=0pt, draw, rectangle, fill=black}}
\tikzset{labeled_vertex_color1/.style={inner sep=2.2pt, outer sep=0pt, draw, rectangle, fill=red}}
\tikzset{labeled_vertex_color2/.style={inner sep=2.2pt, outer sep=0pt, draw, rectangle, fill=blue}}
\tikzset{labeled_vertex_color3/.style={inner sep=2.2pt, outer sep=0pt, draw, rectangle, fill=green}}
\tikzset{labeled_vertex_color4/.style={inner sep=2.2pt, outer sep=0pt, draw, rectangle, fill=pink}}
\tikzset{labeled_vertex_color5/.style={inner sep=2.2pt, outer sep=0pt, draw, rectangle, fill=gray,label=below:{$5$}}}
\tikzset{labeled_vertex_color6/.style={inner sep=2.2pt, outer sep=0pt, draw, rectangle, fill=gray,label=below:{$6$}}}
\tikzset{labeled_vertex_color7/.style={inner sep=2.2pt, outer sep=0pt, draw, rectangle, fill=gray,label=below:{$7$}}}
\tikzset{labeled_vertex_color8/.style={inner sep=2.2pt, outer sep=0pt, draw, rectangle, fill=gray,label=below:{$8$}}}
\tikzset{labeled_vertex_color9/.style={inner sep=2.2pt, outer sep=0pt, draw, rectangle, fill=gray,label=below:{$9$}}}
\tikzset{text_color0/.style={color=black}}
\tikzset{text_color1/.style={color=red}}
\tikzset{text_color2/.style={color=blue}}
\tikzset{text_color3/.style={color=green!70!black}}
\tikzset{text_color4/.style={color=orange}}
\tikzset{text_color5/.style={color=gray}}
\def\outercycle#1#2{ 
\pgfmathtruncatemacro{\plusone}{#1+1} 
\pgfmathtruncatemacro{\zeroshift}{270 - (#2-1)*360/#1/2 } 
\draw  \foreach \x in {0,1,...,#1}{(\zeroshift+\x*360/#1:1) coordinate(x\x)};}
\def\labelvertex#1{\pgfmathtruncatemacro{\vertexlabel}{#1+1 } \draw (x#1) node{\color{black}\tiny\vertexlabel}; }
\tikzset{vertex_u/.style={unlabeled_vertex}} 
\tikzset{vertex_l/.style={labeled_vertex}}
\newcommand{\Fuu}[1]{
\,\vc{\begin{tikzpicture}[scale=0.3]\outercycle{2}{1}
\draw[edge_color#1] (x0)--(x1);  
\draw (x0) node[unlabeled_vertex]{};\draw (x1) node[unlabeled_vertex]{};
\end{tikzpicture}}
\,
}
\newcommand{\Fllu}[3]{
\vc{\begin{tikzpicture}[scale=0.4]\outercycle{3}{2}
\draw[edge_color#1] (x0)--(x1);\draw[edge_color#2] (x0)--(x2);  \draw[edge_color#3] (x1)--(x2);    
\draw (x0) node[labeled_vertex]{};\draw (x1) node[labeled_vertex]{};\draw (x2) node[unlabeled_vertex]{};
\labelvertex0
\labelvertex1
\end{tikzpicture}}}
\newcommand{\FfourEdges}[6]{
\draw[edge_color#1] (x0)--(x1);\draw[edge_color#2] (x0)--(x2);\draw[edge_color#3] (x0)--(x3);  \draw[edge_color#4] (x1)--(x2);\draw[edge_color#5] (x1)--(x3);  \draw[edge_color#6] (x2)--(x3);
}
\newcommand{\Ffour}[5]{
\vc{\begin{tikzpicture}[scale=0.4]\outercycle{4}{2}
\FfourEdges#5
\draw (x0) node[vertex_#1]{};\draw (x1) node[vertex_#2]{};\draw (x2) node[vertex_#3]{};\draw (x3) node[vertex_#4]{};
\ifthenelse{\equal{#1}{l}}{\labelvertex{0}}{}%
\ifthenelse{\equal{#2}{l}}{\labelvertex{1}}{}%
\ifthenelse{\equal{#3}{l}}{\labelvertex{2}}{}%
\ifthenelse{\equal{#4}{l}}{\labelvertex{3}}{}%
\end{tikzpicture}}
}
\newcommand{\Fuuuu}[6]{\Ffour{u}{u}{u}{u}{#1#2#3#4#5#6}}
\tikzset{vertex_u/.style={unlabeled_vertex}}
\tikzset{vertex_l/.style={labeled_vertex}}
\newcommand{\Fnv}[2]{ 
\ifnum#2<#1 \draw (x#2) node[vertex_l]{}; \labelvertex{#2}  
\else  \draw (x#2) node[vertex_u]{}; \fi 
}
\newcommand{\Fne}[3]{
\draw[edge_color#3] (x#1)--(x#2); 
}
\newcounter{Fneid} 
\newcommand{\Fe}[3]{
\ifnum#1=1
  \vc{\begin{tikzpicture}[scale=0.4]\outercycle{1}{2}
  \Fnv{#2}{0}
  \end{tikzpicture}}
\else
\setsepchar{ }
\readlist\elabel{#3}
\pgfmathtruncatemacro{\vertexloop}{#1-1}
\pgfmathtruncatemacro{\vertexloopi}{#1-2}
\pgfmathtruncatemacro{\expectededges}{#1*(#1-1)/2}
\ifnum\elabellen=\expectededges%
\def\cycleshift{2}
\ifnum#1=2\def\cycleshift{1}\fi
\ifnum#1=3\ifnum#2=1\def\cycleshift{1}\fi\fi
\def\Fnscale{0.4}\ifnum#1>4\def\Fnscale{0.45}\fi\ifnum#1>5\def\Fnscale{0.55}\fi\ifnum#1>7\def\Fnscale{0.65}\fi
\vc{\begin{tikzpicture}[scale=\Fnscale]
          \outercycle{#1}{\cycleshift}
          \setcounter{Fneid}{1}         
          \foreach\i in {0,...,\vertexloopi}{   
          \pgfmathtruncatemacro{\jfrom}{\i+1}
          \foreach\j in {\jfrom,...,\vertexloop}{
          \edef\eID{\arabic{Fneid}}
          \edef\eij{\elabel[\eID]}         
            \Fne\i\j{\eij}              
	    \stepcounter{Fneid}
          }}
          \foreach\i in {0,...,\vertexloop}{\Fnv{#2}{\i}  
          }
          \end{tikzpicture}}%
\else
   #1 vertices need \expectededges{} edges but got \elabellen edges.
\fi
\fi
}
\tikzset{vertex_u/.style={unlabeled_vertex}}
\tikzset{vertex_l/.style={labeled_vertex}}
\newcommand{\FnvD}[3]{ 
\ifnum#2<#1 \draw (x#2) node[labeled_vertex_color#3]{}; \labelvertex{#2}  
\else  \draw (x#2) node[vertex_color#3]{}; \fi 
}
\newcounter{Fvneid} 
\newcounter{Fvnvid} 
\newcommand{\Fve}[4]{
\ifnum#1=1
  \vc{\begin{tikzpicture}[scale=0.4]\outercycle{1}{2}
  \FnvD{#2}{0}{#3}
  \end{tikzpicture}}
\else
\setsepchar{ }
\readlist\vlabel{#3}
\readlist\elabel{#4}
\pgfmathtruncatemacro{\vertexloop}{#1-1}
\pgfmathtruncatemacro{\vertexloopi}{#1-2}
\pgfmathtruncatemacro{\expectededges}{#1*(#1-1)/2}
\ifnum\vlabellen=#1%
\ifnum\elabellen=\expectededges%
\def\cycleshift{2}
\ifnum#1=2\def\cycleshift{1}\fi
\ifnum#1=3\ifnum#2=1\def\cycleshift{1}\fi\fi
\def\Fnscale{0.4}\ifnum#1>4\def\Fnscale{0.45}\fi\ifnum#1>5\def\Fnscale{0.55}\fi\ifnum#1>7\def\Fnscale{0.65}\fi
\vc{\begin{tikzpicture}[scale=\Fnscale]
          \outercycle{#1}{\cycleshift}
          \setcounter{Fvneid}{1}         
          \foreach\i in {0,...,\vertexloopi}{   
          \pgfmathtruncatemacro{\jfrom}{\i+1}
          \foreach\j in {\jfrom,...,\vertexloop}{
          \edef\eID{\arabic{Fvneid}}%
          \edef\eij{\elabel[\eID]} 
            \Fne\i\j{\eij}
	    \stepcounter{Fvneid}
          }}
          \foreach\i in {0,...,\vertexloop}{%
          \setcounter{Fvnvid}{\i}
          \stepcounter{Fvnvid}
          \edef\vID{\arabic{Fvnvid}}%
          \edef\vlabeli{\vlabel[\vID]}%
          \FnvD{#2}{\i}{\vlabeli} 
          }%
          \end{tikzpicture}}%
\else
   \text{ #1 vertices need \expectededges{} edges but got \elabellen{} edges. }
\fi
\else
 \text{ #1 vertices need #1 vertex labels but got \vlabellen{} labels. }
\fi
\fi
}
\tikzset{vertex/.style={inner sep=1.7pt, outer sep=0pt, circle, draw=black, fill=black}} 
\title{Semi-Inducibility of some small graphs}
\author{
J\'ozsef Balogh\thanks{Department of Mathematics, University of Illinois Urbana-Champaign, Urbana, IL, USA, and Extremal Combinatorics and Probability Group (ECOPRO), Institute for Basic Science (IBS), Daejeon, South Korea. Email: \texttt{jobal@illinois.edu}. Supported by NSF grants RTG DMS-1937241, FRG DMS-2152488, the Arnold O. Beckman Research Award (UIUC Campus Research Board RB 24012), the Simons Fellowship and the Institute for Basic Science (IBS-R029-C4).}
\and
Bernard Lidick\'{y}\thanks{Department of Mathematics, Iowa State University, Ames, IA. E-mail: {\tt lidicky@iastate.edu}. Research of this author is supported in part by NSF FRG DMS-2152490, the Simons Foundation TSM-00013439 and Scott Hanna Professorship.}
\and
Dhruv Mubayi\thanks{
Department of Mathematics, Statistics and Computer Science, University of Illinois, Chicago, IL 60607. E-mail: {\tt mubayi@uic.edu}. Research partially supported by NSF Award DMS-2153576.
}
\and
Florian Pfender\thanks{Department of Mathematical and Statistical Sciences, University of Colorado Denver, Denver, CO. E-mail:
{\tt Florian.Pfender@ucdenver.edu}. Research of this author is supported in part by NSF grant DMS-2152498.}
\and
Jan Volec\thanks{
Department of Theoretical Computer Science, Faculty of Information Technology,
Czech Technical University in Prague, Th\'akurova 9, Prague, 160 00, Czech Republic.
E-mail: {\tt jan@ucw.cz}.
Research partially supported by the grant 23-06815M of the Grant Agency of the Czech
Republic.
}
}
\date{\today}
\begin{document}

\maketitle

\begin{abstract}    
Let $H$ be a fixed graph whose edges are colored red and blue and let $\beta \in [0,1]$. Let $I(H, \beta)$ be the (asymptotically normalized) 
maximum number of copies of $H$ in a large red/blue edge-colored complete graph $G$, where the density of red edges in $G$ is $\beta$. This refines  the  problem of determining the semi-inducibility of $H$, which is itself a generalization of the classical question of determining the inducibility of $H$. The function $I(H, \beta)$ for $\beta \in [0,1]$ was not known for any graph $H$ on more than three vertices, except when $H$ is a monochromatic clique (Kruskal-Katona) or a monochromatic star (Reiher-Wagner). We obtain  sharp results for some four and five vertex graphs, addressing several recent questions posed by various authors. We also obtain some general results for trees and stars.  Many open problems remain.
\end{abstract}


\section{Introduction} 
Counting subgraphs inside a host graph is a fundamental problem in extremal combinatorics. A classical question is the following: given a fixed graph \(F\), what is the maximum number of induced copies of \(F\) in an \(n\)-vertex host graph? This is the inducibility problem introduced by 
Pippenger and Golumbic~\cite{Pippenger1975}. Here we focus on a red-blue ($2$-edge-colored) refinement of this classical problem.  Let \(H\) be a graph whose edges are colored red or blue, and let \(G\) be a red-blue colored  complete graph on $n$ vertices.  

\begin{definition} \label{defn:HG}
Write
$(\# H, G)$
for the number  of injections $f:V(H) \rightarrow V(G)$ such that $uv \in E(H)$ is red if only if $f(u)f(v) \in E(G)$ is red. Furthermore, if $H$ has $h$ vertices, 
\[
\max(H, n) \;:=\; \max_{\substack{G\; \text{a red-blue }K_n}} (\# H, G) \quad \quad  and \quad  \quad 
\max(H) \;:=\; \lim_{n\to\infty} \frac{\max(H,n)}{n^{h}}\,. 
\]  
\end{definition}
To clarify the normalizing factor in Definition~\ref{defn:HG},  if $H$ is a $K_3$ with all three edges red, then $(\#H, G)$ is six times the number of red $K_3$ in $G$. 
For $\max(H)$, we chose to use the normalizing factor $n^h$ instead of  $(n)_h$ in order to be consistent with the definition of $\max(H)$ defined earlier in~\cite{basit2025semiinducibilityproblem}. This will not matter, since our results and questions are asymptotic in nature.

The problem of determining $\max(H, n)$ and $\max(H)$ is referred to as the {\it semi-inducibility} problem.
  In the classical inducibility setting one demands that, on a chosen set of \(|V(H)|\) vertices of $G$, specified edges are present and all other edges are absent.  In contrast, in the semi-inducibility setting some edges of \(H\) must appear in the copy (say the red‐ones) and some edges must be absent (the blue-ones), but all other pairs  are allowed to be either present or absent: thus one allows a “semi-induced” structure.   

The semi-inducibility problem was introduced by Basit,  Granet,  Horsley,  K\"undgen, and Staden~\cite{basit2025semiinducibilityproblem}, where they also determined $\max(H)$ for alternating colored paths and walks and also for several graphs $H$ on four vertices.
Subsequently,  $\max(H)$ was determined for the alternating colored six cycle  by Chen and Noel~\cite{chen2025} 
and 
for several more four vertex graphs by Bodn\'ar and Pikhurko~\cite{bodnar2025}. Both~\cite{chen2025} and~\cite{bodnar2025} almost exclusively used  Flag Algebras.
The specific case of the alternating path was also investigated by Chen, Clemen and Noel~\cite{chen2025maximizingalternatingpathsentropy} using entropy.

Our goal is to initiate a systematic attack on a finer version of this problem: we determine or bound \(\max(H,n)\) (and hence \(\max(H)\)) for various small red-blue graphs \(H\), when the underlying graph $G$ has a given density $\beta$. More precisely, given $\beta \in [0,1]$ and red/blue graph $H$ with $h$ vertices, we consider the parameter $I(H, \beta)$ formally defined as follows.

\begin{definition} \label{defn:Hgood}
A sequence of red/blue colored cliques $(G_n)=(G_n)_{n=1}^{\infty}$ has density $\beta$ if $|V(G_n)|=n$ and the red density of edges in $G_n$ tends to $\beta$. The sequence $(G_n)$ is $H$-good if $\rho(H, (G_n)):= \lim_{n \rightarrow \infty} (\# H, G_n)/n^h$ exists. 
Then
$$I(H, \beta) = \sup \, \rho(H, (G_n)),$$
where the supremum is taken over all $H$-good sequences with density $\beta$.
\end{definition}
Clearly, $\max(H)= \sup_{\beta}I(H, \beta)$ over $\beta \in [0,1]$, but determining the function $I(H, \beta)$ for all $\beta$ is more challenging  and gives a more complete picture of the number of copies of $H$ that can appear in a large graph $G$. 
In order to standardize our notation,  if a pair is colored red, then we call it an edge and if it is colored blue then  we call it a non-edge. This allows us to dispense with red/blue colorings and instead we just consider graphs with certain non-edges specified.
In figures, we either use colors red and dotted blue\footnote{We also refer to the blue edges as non-edges.} to indicate that red and blue edges are fixed and the colors of the remaining non-edges are not specified or we use black and non-edges to describe induced graphs where all pairs are specified.

The function $I(H, \beta)$ is known for all monochromatic cliques and monochromatic stars. The former follows from the Kruskal-Katona theorem and the latter is a result of Reiher and Wagner~\cite{ReiherStar}. In addition, a more recent observation of Liu, Mubayi and Reiher~\cite{LMR}, determines $I(H, \beta)$ for the remaining nonmonochromatic 3-vertex graphs $H$; when $H$ is a nonmonochromatic triangle,   $I(H, \beta)$  is closely related to the problem of minimizing the number of triangles in a graph with a given density, which is answered by
a classical result of Razborov~\cite{Razborov}. 

Consequently,  the smallest graphs $H$ for which $I(H, \beta)$ is not known are graphs on four vertices. Perhaps the first interesting case is the four vertex $3$-edge alternating path $AP_4$, which consists of vertices $u,v,w,x$ where $uv$ and $wx$ are red and $vw$ is blue (see Figure~\ref{fig:AP_4}).  More generally, let $AP_k$ be the red/blue colored $(k+1)$-vertex path with $k$ edges where every two incident edges have different colors; if $k$ is even the end edges will have distinct colors, while if $k$ is odd, we further stipulate that the first edge and last edge are both red.

\tikzset{vertex/.style={inner sep=2pt, outer sep=0pt, circle, fill=black,draw}}

\begin{figure}[ht]
\begin{center}

\begin{tikzpicture}


  \node[vertex,label=below:$v$] (v) at (0,0) {};
  \node[vertex,label=below:$w$] (w) at (1.5,0) {};

  \node[vertex,label=above:$u$] (u) at (0,1.5) {};
  \node[vertex,label=above:$x$] (x) at (1.5,1.5) {};

  \draw[edge_color_blue] (v) -- (w);
  \draw[edge_color_red]        (u) -- (v);   
  \draw[edge_color_red]        (w) -- (x);   

 \node at (0.75,-1) {$AP_4$}; 


  \node[vertex,label=below:$v$] (a1) at (6,0) {};
  \node[vertex,label=below:$w$] (a2) at (7.5,0) {};
  \node[vertex,label=above:$x$] (a3) at (7.5,1.5) {};
  \node[vertex,label=above:$u$] (a4) at (6,1.5) {};

  \draw[edge_color_blue]        (a1) -- (a2);
  \draw[edge_color_red] (a2) -- (a3);
  \draw[edge_color_blue]        (a3) -- (a4);
  \draw[edge_color_red] (a4) -- (a1);
 \node at (6.75,-1) {$AC_4$}; 
\end{tikzpicture}
\end{center}
\caption{Alternating path $AP_4$ and Alternating 4-cycle $AC_4$.}
\label{fig:AP_4}
\end{figure}
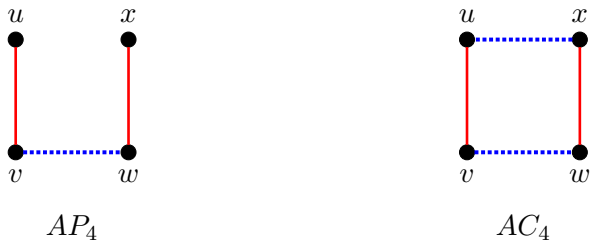
 Basit,  Granet,  Horsley,  K\"undgen and Staden~\cite[Corollary 1.4]{basit2025semiinducibilityproblem} determined 
  $\max(AP_k)$ when $k$ is even. Chen, Clemen and Noel~\cite[Theorem 1.2]{chen2025maximizingalternatingpathsentropy} determined $\max(AP_k)$ for all $k\ge 1$,
 and  Bodn\'ar and Pikhurko~\cite[Theorem 6.2]{bodnar2025exactinducibility} also determined $\max(AP_4)$. Despite these results about $\max(AP_k)$, the function $I(AP_k, \beta)$ was not known for any $k \ge 4$.

Our first result completely determines $I(AP_4, \beta)$, answering a question of 
 Basit,  Granet,  Horsley,  K\"undgen, and Staden~\cite[Problem 9.2]{basit2025semiinducibilityproblem}. The short proof we give in Section~\ref{sec:AP4} uses only double  counting of various quantities related to the vertex degrees in a graph.
\medskip

\begin{theorem} 
\label{thm:edge-nonedge-edge}
For all $\beta \in [0,1]$, we have $I(AP_4, \beta) = \beta^2(1-\beta)$.
Equality is achieved only for (asymptotically) regular graphs.
\end{theorem}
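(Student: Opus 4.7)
The plan is to reduce the count of $AP_4$'s to a simple algebraic expression involving only the number of edges and the number of walks of length three, and then to apply a Blakley--Roy-style lower bound on walks of length three.

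First I would bound $(\#AP_4,G)$ by an unrestricted sum over ordered quadruples. Let $A$ denote the adjacency matrix of $G$, let $d(v)$ denote the degree, let $m = |E(G)|$, and let $W_3 = \mathbf{1}^T A^3 \mathbf{1}$ denote the number of ordered walks of length three in $G$. Since all summands are nonnegative, dropping the distinctness constraint on the quadruple $(u,v,w,x)$ can only increase the count, so
\[
(\#AP_4,G) \;\le\; \sum_{u,v,w,x} A_{uv}(1-A_{vw})A_{wx} \;=\; \bigl(\textstyle\sum_v d(v)\bigr)^2 - W_3 \;=\; (2m)^2 - W_3.
\]

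The heart of the argument is the inequality $W_3 \ge (2m)^3/n^2$, which I would establish using only Cauchy--Schwarz and Jensen's inequality. Setting $f(v) := \sum_{w\in N(v)} d(w)$ so that $W_3 = \sum_v d(v)f(v)$, a Cauchy--Schwarz inequality applied at each vertex gives $d(v)f(v) \ge \bigl(\sum_{w\in N(v)} \sqrt{d(w)}\bigr)^{2}$. Taking square roots, summing over $v$, and swapping the order of summation yields $\sum_v \sqrt{d(v)f(v)} \ge \sum_w d(w)^{3/2}$. One more Cauchy--Schwarz gives $nW_3 \ge \bigl(\sum_w d(w)^{3/2}\bigr)^{2}$, and Jensen's inequality applied to the convex function $x\mapsto x^{3/2}$ gives $\sum_w d(w)^{3/2} \ge (2m)^{3/2} n^{-1/2}$. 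Combining these yields $W_3 \ge (2m)^3/n^2$.

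Plugging $2m = \beta n(n-1)$ into the upper bound gives $(\#AP_4,G) \le (2m)^2 - (2m)^3/n^2 = \beta^2(1-\beta) n^4 + O(n^3)$, hence $I(AP_4, \beta) \le \beta^2(1-\beta)$ after normalizing by $n^4$. For the matching lower bound, a direct computation in any sequence of (asymptotically) $\beta n$-regular graphs gives $(\#AP_4, G_n)/n^4 \to \beta^2(1-\beta)$, since for such graphs both $(2m)^2 - W_3$ and the actual count equal $nd^2(n-d)+O(n^3)$ with $d\sim\beta n$. For the equality characterization, the strict convexity of $x^{3/2}$ together with the equality conditions in the Cauchy--Schwarz steps force the degree sequence to be asymptotically constant in any extremal sequence.

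The main obstacle is the lower bound $W_3 \ge (2m)^3/n^2$. Since $A$ typically has negative eigenvalues (e.g.\ for $P_3$ one checks directly that $W_1 W_3 < W_2^2$), the naive Cauchy--Schwarz attempt treating $A$ as positive semidefinite in the bilinear form $\mathbf{d}^T A \mathbf{d}$ fails; one must exploit the entrywise nonnegativity of $A$, which is exactly what the Cauchy--Schwarz/Jensen chain above does.
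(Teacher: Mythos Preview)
Your argument is correct and follows essentially the same approach as the paper. The paper bounds $(\#AP_4,G)$ via $\sum_{uv\notin E} d_u d_v$ and then applies Nikiforov's inequality $\sum_{uv\in E} d_u d_v \ge 4m^3/n^2$; since $W_3 = 2\sum_{uv\in E} d_u d_v$, your Blakley--Roy bound $W_3 \ge (2m)^3/n^2$ is exactly the same inequality, and your Cauchy--Schwarz/Jensen chain is a minor repackaging of the paper's proof of that lemma.
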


We generalize part of Theorem~\ref{thm:edge-nonedge-edge} to the family of double stars defined as follows. For an integer $s$, we define the alternating double-star $D(s)$ 
as the following labeled (and ordered) graph on $2s+2$ vertices: $V(D(s))=\{v_1,\ldots, v_s, v, u, u_1,\ldots, u_s\}$, and $E(D(s))=\{v_1v,\ldots, v_sv, uu_1,\ldots, uu_s\}$ and $uv\not\in E(D(s))$ (see Figure~\ref{fig:S5}).

\begin{figure}[ht]
\begin{center}
\begin{tikzpicture}
\draw
    (0,0) node[vertex,label=below:$v$](v){}
    (3.5,0) node[vertex,label=below:$u$](u){}
 ;   
    \foreach \i in {1,...,5}{
        \draw
        (v) ++(90+72*\i:1)node[vertex](v\i){} ++(90+72*\i:0.4) node{$v_\i$}
        (u) ++(90+72*\i:1)node[vertex](u\i){} ++(90+72*\i:0.4) node{$u_\i$}           
;
    \draw[edge_color_red]     
        (v) -- (v\i)
        (u) -- (u\i)    
        ;
    }
    
    \draw[edge_color_blue] (u)--(v);
\end{tikzpicture}
\end{center}
\caption{The graph $D(5)$.}
\label{fig:S5}
\end{figure}
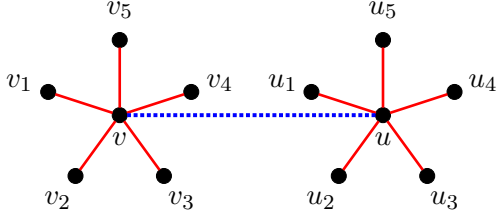

Clearly $D(1)=AP_4$.  The  following result determines $I(D(s), \beta)$
for a range of $\beta$ and also determines $\max(D(s))$.

\begin{theorem}\label{broom}
Fix $s \ge 1$ and $1-1/2s \le \beta\le 1$. Then $I(D(s), \beta) = \beta^{2s}(1-\beta)$. The equality holds for (asymptotically) regular graphs. Moreover,
$\max(D(s)) = (2s)^{2s}\cdot(2s+1)^{-2s-1}$.
\end{theorem}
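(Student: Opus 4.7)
The plan is to reduce the problem to an inequality about the degree sequence via AM-GM and then analyze the univariate function $\phi(x) = x^{2s}(1-x)$. First, by counting injections, asymptotically
\[(\#D(s), G) \;\sim\; \sum_{(u,v):\, u\neq v,\, uv\notin E(G)} d(u)^s\, d(v)^s,\]
where the sum is over ordered non-edge pairs: once the images of the centers $u$ and $v$ are fixed, the number of extensions is a product of falling factorials asymptotic to $d(u)^s d(v)^s$. Applying AM-GM pointwise gives $d(u)^s d(v)^s \le \tfrac{1}{2}(d(u)^{2s} + d(v)^{2s})$, and summing over ordered non-edge pairs yields the key reduction
\[\sum_{(u,v):\, uv\notin E} d(u)^s d(v)^s \;\le\; \sum_v d(v)^{2s}\bigl(n-1-d(v)\bigr).\]
The unconditional bound $\max(D(s)) \le (2s)^{2s}/(2s+1)^{2s+1}$ already falls out: the map $x\mapsto x^{2s}(n-1-x)$ is maximized on $[0, n-1]$ at $x = 2s(n-1)/(2s+1)$ with value $(2s)^{2s}(n-1)^{2s+1}/(2s+1)^{2s+1}$, so summing over $n$ vertices and normalizing by $n^{2s+2}$ gives the bound, matched by a regular graph of density $2s/(2s+1)$.

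For the refined statement $I(D(s), \beta) \le \beta^{2s}(1-\beta)$ when $\beta \ge 1 - 1/(2s)$, rescale using $x_v := d(v)/n$ so that $n^{-(2s+1)}\sum_v d(v)^{2s}(n-1-d(v)) \sim \sum_v \phi(x_v)$. A direct computation gives
\[\phi''(x) = 2s\, x^{2s-2}\bigl((2s-1) - (2s+1)x\bigr),\]
so $\phi$ has a unique inflection point at $x_0 := (2s-1)/(2s+1)$, being convex on $[0, x_0]$ and concave on $[x_0, 1]$. Let $L(x) = \phi(\beta) + \phi'(\beta)(x - \beta)$ be the tangent line to $\phi$ at $\beta$; a short calculation gives $L(0) = 2s\beta^{2s}\bigl(\beta - \tfrac{2s-1}{2s}\bigr)$, which is nonnegative precisely in our range $\beta \ge 1 - 1/(2s)$. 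The main obstacle is to verify the tangent-line bound $\phi(x) \le L(x)$ for all $x\in[0,1]$: since $\beta > x_0$, concavity on $[x_0, 1]$ gives $\phi \le L$ there; on the convex interval $[0, x_0]$, $\phi - L$ is convex and nonpositive at both endpoints ($x_0$ by the previous step, $0$ by the computation of $L(0)$), so the secant bound for convex functions forces $\phi - L \le 0$ throughout $[0,1]$.

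Applying this pointwise inequality to each $x_v$, summing, and using $\sum_v x_v \sim \beta n$, the linear correction $\phi'(\beta)\bigl(\sum_v x_v - n\beta\bigr)$ is asymptotically negligible, so $\sum_v \phi(x_v) \le (1 + o(1))\, n\, \phi(\beta) = (1 + o(1))\, n\, \beta^{2s}(1-\beta)$. Combined with the AM-GM reduction, this establishes $I(D(s), \beta) \le \beta^{2s}(1-\beta)$, matched by any sequence of asymptotically $\beta n$-regular graphs. For the equality statement, AM-GM forces $d(u) = d(v)$ whenever $uv \notin E$, while the tangent-line bound forces $x_v = \beta$ for every $v$ (since the zero of $\phi - L$ is unique at $x=\beta$ whenever $\beta > 1 - 1/(2s)$), so the extremal configurations are exactly the asymptotically regular graphs.
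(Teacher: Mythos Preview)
Your proof is correct and shares the paper's opening reduction: count injections, apply AM--GM to pass from $\sum_{uv\notin E} d(u)^s d(v)^s$ to $\sum_v d(v)^{2s}(n-1-d(v))$, and then study $\phi(x)=x^{2s}(1-x)$ via its inflection point at $x_0=(2s-1)/(2s+1)$. The unconditional bound on $\max(D(s))$ is obtained the same way in both.

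The difference lies in how the constrained inequality $\sum_v \phi(x_v)\le n\phi(\beta)$ (subject to $\sum_v x_v=n\beta$) is proved. The paper isolates a general optimization lemma: if $f$ is convex on $[0,\gamma]$ and concave on $[\gamma,1]$, then under a sum constraint the maximum of $\sum_i f(x_i)$ is attained at a vector whose nonzero entries are all equal to some $\alpha\ge\gamma$ (up to one exceptional coordinate). From this it extracts $\sum_v\phi(x_v)\le (d/\alpha)\phi(\alpha)=d\,\alpha^{2s-1}(1-\alpha)$ with $\alpha\ge\beta$, and then uses that $\alpha\mapsto\alpha^{2s-1}(1-\alpha)$ is decreasing for $\alpha\ge 1-1/(2s)$. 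Your route is the tangent-line linearization: you verify $L(0)\ge 0$ exactly when $\beta\ge 1-1/(2s)$, combine concavity on $[x_0,1]$ with convexity on $[0,x_0]$ to get the global pointwise bound $\phi\le L$, and sum. This is more direct and yields the uniqueness of the equality case ($x_v=\beta$ for all $v$ when $\beta>1-1/(2s)$) almost for free. The paper's structural lemma, on the other hand, is reusable and is in fact invoked again later for the $S_{2,1}$ analysis, where the tangent-line trick would not immediately apply.
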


Our next result concerns the alternating $4$-cycle, which is the graph obtained from $AP_4$ by adding a non-edge between its endpoints. Let us call this colored graph $AC_4$, which has vertices $u,v,w,x$, red edges $uv$ and $wx$ and blue edges $vw$ and $xu$ 
(see Figure~\ref{fig:AP_4}). The number of $AC_4$ in a graph $G$ is the same as the number of $AC_4$ in the complement of $G$. Therefore $I(AC_4, \beta)=I(AC_4, 1-\beta)$.

In~\cite{basit2025semiinducibilityproblem}, $\max(AC_4)$ was determined, and the question of determining $I(AC_4, \beta)$ for all $\beta$ was reiterated in Problem 9.3 of \cite{basit2025semiinducibilityproblem} (earlier the third author also had posed this question).  
Our result below answers this question whenever $\beta \in\{1/k, 1-1/k\}$ for every positive integer $k$.

\begin{theorem} \label{thm:altc4}
We have $I(AC_4, \beta) \le \beta^2(1-\beta)$, with equality if $\beta \in\bigcup_{k\in\mathbb{N}}\{1/k, 1-1/k\}$. 
\end{theorem}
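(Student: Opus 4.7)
The plan is to deduce the upper bound directly from Theorem~\ref{thm:edge-nonedge-edge} via a one-line containment argument, and then exhibit matching constructions at $\beta=1/k$, transporting the result to $\beta=1-1/k$ by complementation. The entire argument is short and relies only on Theorem~\ref{thm:edge-nonedge-edge} plus direct counts in two classical (complementary) constructions; the genuine difficulty that the theorem leaves open is the determination of $I(AC_4,\beta)$ for $\beta$ outside the set $\bigcup_k\{1/k,1-1/k\}$, where neither the $AP_4$-bound nor its complementary symmetric counterpart is expected to be tight.

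\textbf{Upper bound.} Any ordered $4$-tuple $(u,v,w,x)$ witnessing $AC_4$ in a red/blue clique $G$ is also an ordered copy of $AP_4$ on the same sequence, obtained by simply discarding the constraint that $xu$ be a non-edge. Hence $(\#AC_4,G) \le (\#AP_4,G)$ for every $G$, and dividing by $n^{4}$ and taking suprema over $AC_4$-good sequences of density $\beta$ yields
\[
I(AC_4,\beta) \;\le\; I(AP_4,\beta) \;=\; \beta^{2}(1-\beta)
\]
by Theorem~\ref{thm:edge-nonedge-edge}.

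\textbf{Matching construction at $\beta=1/k$.} Let $G_n$ be the disjoint union of $k$ cliques of size $n/k$ (rounding negligibly when $k\nmid n$); the red density tends to $1/k$. In $G_n$, a labelled $AC_4$-tuple $(u,v,w,x)$ exists precisely when $\{u,v\}$ and $\{w,x\}$ lie in two distinct cliques, since the constraints force $u,v$ and $w,x$ each into a common clique while $v,w$ and $x,u$ must straddle different ones. Counting directly,
\[
(\#AC_4,G_n) \;=\; k(k-1)\Bigl(\tfrac{n}{k}\bigl(\tfrac{n}{k}-1\bigr)\Bigr)^{\!2} \;\sim\; \tfrac{k-1}{k^{3}}\,n^{4},
\]
so $\rho(AC_4,(G_n))=(k-1)/k^{3}$, which equals $\beta^{2}(1-\beta)$ at $\beta=1/k$ and thus matches the upper bound.

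\textbf{Transport to $\beta=1-1/k$.} Taking complements sends $G_n$ to the balanced complete $k$-partite graph $K_{n/k,\dots,n/k}$, which has red density $1-1/k$. By the complement-invariance of the $AC_4$-count (noted in the text immediately before the theorem), the complemented sequence still realises $AC_4$-density $(k-1)/k^{3}$. Combined with the symmetry $I(AC_4,\beta)=I(AC_4,1-\beta)$ and the equality already established at $\beta=1/k$, this pins down the value of $I(AC_4,\cdot)$ at $\beta=1-1/k$ and furnishes the promised equality at those densities.
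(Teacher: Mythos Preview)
Your upper bound and the $\beta=1/k$ construction are correct and essentially the paper's argument repackaged. The paper writes $(\#AC_4,G)\le 2\sum_{uv\notin E}d_ud_v$ and then applies Theorem~\ref{thm:D(1)}; since $(\#AP_4,G)=2\sum_{uv\notin E}d_ud_v-2t$ (notation as in Theorem~\ref{thm:D(1)}), your containment inequality $(\#AC_4,G)\le(\#AP_4,G)$ is the same bound minus the nonnegative term $2t$, and invoking Theorem~\ref{thm:edge-nonedge-edge} is just the asymptotic face of Theorem~\ref{thm:D(1)}. Your disjoint-clique construction at $\beta=1/k$ is exactly the paper's.

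There is a genuine gap in your treatment of $\beta=1-1/k$. Complementation indeed gives $I(AC_4,1-1/k)=I(AC_4,1/k)=(k-1)/k^{3}$, but at $\beta=1-1/k$ the bound $\beta^{2}(1-\beta)$ equals $(k-1)^{2}/k^{3}$, and these differ for every $k\ge 3$. So the complemented construction does \emph{not} meet the stated upper bound, and by the very symmetry you invoke, nothing can: $I(AC_4,1-1/k)\le (1/k)^{2}(1-1/k)<(1-1/k)^{2}(1/k)$. Your sentence ``furnishes the promised equality at those densities'' is therefore false, and the arithmetic mismatch should have been caught. (The paper's own proof in Section~\ref{sec:AC4} only argues the equality case for $\beta=1/k$; the inclusion of $1-1/k$ in the theorem statement is meant in the sense that the \emph{value} of $I(AC_4,\cdot)$ is pinned down there via symmetry, not that the displayed bound $\beta^{2}(1-\beta)$ is attained. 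Either way, your complementation paragraph does not prove what it claims.)
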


Assume $\beta\le 1/2$. The proof of Theorem~\ref{thm:altc4} shows that equality holds only  if the underlying graph is almost regular and has only a few $3$-edge paths whose endpoints are not adjacent.
The only way to make such a construction is to take vertex-disjoint cliques of size $(\beta+o(1))n$. This forces $1/\beta$ to be an integer. We suspect that when $1/\beta$ is not an integer, roughly the same construction is optimal. The following question makes this precise, positing that the behavior of $I(AC_4, \beta)$ when $\beta$ is not of the form $1/k$ or $1-1/k$, is similar to that of the triangle density problem solved by Razborov~\cite{Razborov}.

\begin{question}\label{conj:altC4}
    For all $\beta<1/2$, and $AC_4$-good sequences $(G_n)$ with density $\beta$,  is it true that the supremum of  $\rho(AC_4, (G_n))$ is achieved if $G_n$ comprises $k=\lceil 1/\beta \rceil$ pairwise disjoint cliques, where $k-1$ of these cliques have (asymptotically) the same size, and the remaining clique has size at most $n/k$?
\end{question}

Liu, Mubayi, and Reiher~\cite[Problem 7.1.]{LMR}  asked if there is a red/blue colored {\em complete} graph $H$ for which  $I(H,\beta)$ has two global maxima (i.e., in the inducibility setting). 
We show the answer to their problem is yes if we do not require $H$ to be complete  (i.e., in the semi-inducibility setting).

Denote by $P_{EENN}$ a path on five vertices with pattern edge-edge-nonedge-nonedge (see Figure~\ref{fig:P_EENN}).
Define $K(a,n)$ to be the $n$-vertex graph consisting of a clique on $a$ vertices and $n-a$ isolated vertices.
Given graphs $G$ and $H$ on the same vertex set, recall that the \emph{edit distance} between $G$ and $H$ is the minimum number of edges that we need to delete or add to transform $G$ into $H$.
The following theorem implies that $I(P_{EENN},\beta)$ has maximum $27/252$ iff $\beta \in \{ 9/16, 7/16 \}$ and the asymptotic maximizer is $K(a,n)$ or its complement for $a = 3/4$.

\begin{figure}[ht]
\begin{center}
\begin{tikzpicture}
  \node[vertex,label=below:$ $] (v) at (0,0) {};
  \node[vertex,label=below:$ $] (w) at (1,0) {};
  \node[vertex,label=above:$ $] (u) at (2,0) {};
  \node[vertex,label=above:$ $] (x) at (3,0) {};
  \node[vertex,label=above:$ $] (y) at (4,0) {};
  \draw[edge_color_red]  (v) -- (w)--(u);   \draw[edge_color_blue] (u)--(x)--(y);
\end{tikzpicture}
\end{center}
\caption{Path $P_{EENN}$.}
\label{fig:P_EENN}
\end{figure}
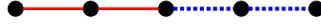

\begin{theorem}\label{thm:IPEEN}
 For all $\beta \in [0,1]$,    $I(P_{EENN},\beta) = \max\{ \beta^{3/2}-\beta^2, (1-\beta)^{3/2}-(1-\beta)^2\}$. 
 Moreover, for a fixed $\beta$ and $\varepsilon > 0$ there exist $\delta >0$ and  $n_0$ such for $n>n_0$,  every $n$-vertex red/blue clique $G$ with red density in $[\beta-\delta, \beta+\delta]$ maximizing $(\#P_{EENN},G)$ has edit distance at most $\varepsilon n^2$ from $K(a,n)$ or its complement for some $a$.
\end{theorem}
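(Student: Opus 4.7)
For each $\beta$, both $G = K(a,n)$ and its complement are natural candidates. A direct count in $K(a,n)$ with $a = \alpha n$ shows that for a copy of $P_{EENN}$, the vertices $v_1,v_2,v_3$ must all lie inside the clique (since the first two edges are red), $v_4$ must be isolated (as a non-neighbor of the clique vertex $v_3$), and $v_5$ is unrestricted (every pair containing the isolated $v_4$ is a non-edge). This gives $\rho(P_{EENN},K(a,n)) \to \alpha^3(1-\alpha)$. Choosing $\alpha = \sqrt\beta$ yields $\beta^{3/2}-\beta^2$, while using the complement (which has edge density $1-\alpha^2$) with $\alpha = \sqrt{1-\beta}$ yields $(1-\beta)^{3/2}-(1-\beta)^2$. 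The fact that $P_{EENN}$ is self-complementary under path-reversal, so $t(P_{EENN},W) = t(P_{EENN},W^c)$, gives the symmetry $I(P_{EENN},\beta) = I(P_{EENN},1-\beta)$ and reduces attention to $\beta \le 1/2$.

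\textbf{Upper bound.} In graphon language, integrating out the end-vertices gives
\[
t(P_{EENN},W) \;=\; \int_b A(b)\,B(b)\, db,
\]
where $A(b) = \int W(a,b) d(a)\, da$ and $B(b) = \int (1-W(b,c))(1-d(c))\, dc$. Using $\int W = \beta$ one derives the identity $A(b) - B(b) = d(b) - (1-\beta)$, so $t(P_{EENN},W) = A_2 - T_2 + (1-\beta)D_2$ where $A_2 = \int A^2$, $T_2 = \iint W(a,b) d(a) d(b)$, and $D_2 = \int d^2$. A Cauchy-Schwarz argument gives $A_2 \le \int d^4$, tight on the two-block step graphon (where $A = d^2$ a.e.). To conclude the tight bound I would combine these identities with a flag-algebra SDP certificate in the spirit of~\cite{bodnar2025,chen2025}, with coefficients parametrized by $\beta$; an alternative is to apply Lagrange multipliers on the full optimization problem over graphons (subject only to $\int W = \beta$) and classify the critical points, showing that the only optima are the two-block step graphons identified in the construction.

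\textbf{Stability and main obstacle.} For the stability conclusion, once the tight bound is in place I would trace equality in the intermediate Cauchy-Schwarz inequalities: equality in $A_2 \le \int d^4$ forces $d$ to be almost bimodal on $\{0,\sqrt\beta\}$ (or $\{0,\sqrt{1-\beta}\}$), and the equality conditions further force high-degree vertices to have almost identical neighborhoods, yielding a near-clique. A standard graph removal argument then gives the $\varepsilon n^2$ edit-distance bound. The chief obstacle is the upper bound itself: since the extremal value involves the half-power $\beta^{3/2}$, no proof linear in fixed graph densities with constant coefficients can yield a tight certificate. In particular, the naive pointwise bound $A(b)B(b) \le \min(d(b),\beta)(1-\max(d(b),\beta))$ (tight on the extremizer) combined with Jensen's inequality yields only the weaker $\beta(1-\beta)$. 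A successful proof must therefore exploit the graph-theoretic constraints tying $A$, $B$, and $d$ together, most likely via a parametric flag-algebra SDP or a Lagrange multiplier classification of critical graphons.
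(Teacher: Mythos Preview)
Your construction and symmetry arguments are correct, and your graphon identities $A(b)-B(b)=d(b)-(1-\beta)$ and $t(P_{EENN},W)=A_2-T_2+(1-\beta)D_2$ are valid (and the bound $A_2\le \int d^4$ does hold, via $A(b)^2\le d(b)\int W(a,b)d(a)^2\,da$ and iterating). But as you yourself say, these do not close the upper bound: you end by proposing to \emph{find} a parametric flag-algebra certificate or to classify critical graphons via Lagrange multipliers, without doing either. That is the whole difficulty, so what you have is a sketch with the main step missing.

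The paper does exactly the flag-algebra route you gesture at, and the point is that the certificate is not generic: writing $a=\sqrt\beta$, it combines the density-fixing multiplier $(\Fe20{2}-a^2)$ against a specific linear combination of $3$-vertex densities with two explicit sums of squares,
\[
60(a-a^2)\Big\llbracket\big(a\,\Fe43{1 1 1 2 1 1}-(1-a)\,\Fe43{1 1 1 2 2 2}\big)^2\Big\rrbracket
\quad\text{and}\quad
30C\Big\llbracket\big(a\,\Fe43{1 2 2 2 2 2}-(1-a)\,\Fe43{1 2 1 2 1 2}\big)^2\Big\rrbracket,
\]
where the scalars $(B,C)$ in the multiplier term must be chosen \emph{piecewise} in $\beta$ (one choice on $[1/2,0.64]$, another on $(0.64,1]$). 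No single-regime certificate works across the interval; this is precisely the nonlinearity in $\beta$ you flagged, and it is resolved by hand-tuning rather than by a general principle. Your preliminary identities in $A,B,d$ play no role in the paper's argument.

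For stability the paper does not trace equality in Cauchy--Schwarz. Instead, once the certificate is in hand, it reads off which $5$-vertex subgraphs $F$ have slack coefficient $c_{a,F}=0$; any near-extremal sequence must have $o(1)$ density of every other $F$. This forces the density of five specific $4$-vertex induced subgraphs to vanish (with one extra case at $\beta=1/2$ handled by a separate certificate), and then the \emph{induced} removal lemma plus a short structural analysis of maximum cliques pins down $K(a,n)$ or its complement. Your proposed route via ``$A=d^2$ a.e.\ forces bimodal degrees'' does not obviously yield the clique structure without additional work, and in any case cannot start until the tight upper bound is established.
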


Denote by $P_{E^{\ell}N^k}$ a path of length $\ell+k$ with first $\ell$ edges red and the remaining $k$ edges blue. Notice $P_{EENN}$ is $P_{E^2N^2}$. 
Flag algebras experiments suggest that $K(a,n)$ and its complement are also extremal constructions for $P_{E^3N^3}$. We expect it is the case for longer symmetric paths as well.
\begin{conjecture}
    For $\ell \geq 2$, $I(P_{E^\ell N^\ell},\beta)$ is asymptotically achieved by $K(a,n)$ or its complement. 
\end{conjecture}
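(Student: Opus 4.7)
The conjecture asserts that $I(P_{E^{\ell}N^{\ell}}, \beta)$ equals the asymptotic density of $P_{E^{\ell}N^{\ell}}$ in $K(a,n)$ with $a$ chosen so that the red density is $\beta$, or in its complement, whichever is larger. For the lower bound I would evaluate this density directly: the first $\ell+1$ vertices of any copy of $P_{E^{\ell}N^{\ell}}$ in $K(a,n)$ must lie in the red clique, the $(\ell+2)$-nd vertex must lie outside it, and the remaining $\ell-1$ vertices form a blue walk in the ``semi-bipartite'' graph $K_n\setminus K_a$; the count of such walks is controlled asymptotically by a $2\times 2$ transfer matrix whose entries depend only on $c = a/n = \sqrt{\beta}$. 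Together with the analogous count for the complement this gives the claimed lower bound uniformly in $\beta\in[0,1]$.

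For the matching upper bound my plan is to group copies by their transition vertex $v_{\ell+1}$ (the last vertex of the red portion of the path) and write
$$(\#P_{E^{\ell}N^{\ell}},G)\;=\;\sum_{v\in V(G)}R_\ell(v)\,B_\ell(v)\;+\;O(n^{2\ell}),$$
where $R_\ell(v)$ is the number of ordered red $\ell$-paths ending at $v$ and $B_\ell(v)$ is the number of ordered blue $\ell$-paths starting at $v$. The central analytic problem is then to maximize $n^{-2\ell-1}\sum_v R_\ell(v)B_\ell(v)$ subject to the red edge density being $\beta$. The natural attack is induction on $\ell$ with Theorem~\ref{thm:IPEEN} as the base case: the recursion $R_\ell(v) = \sum_{u\sim_R v}R_{\ell-1}(u)+O(n^{\ell-1})$ (and its blue analogue) could be chained with the $\ell=2$ bound and convexity moves to reach arbitrary $\ell$. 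For small $\ell$ a flag-algebra SDP should already verify the conjectured value numerically, and extracting a clean rational dual in the style of~\cite{bodnar2025,chen2025} would then produce an analytic proof together with a stability certificate.

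The main obstacle is the strongly bimodal character of the conjectured extremizer. In $K(a,n)$ the product $R_\ell(v)B_\ell(v)$ is carried almost entirely by clique vertices, where $R_\ell(v)\approx a^\ell$ and $B_\ell(v)$ is large because essentially every blue walk has to cross out of the clique at its first step, while for vertices outside the clique $R_\ell(v)=0$. Any tight upper bound must therefore be sharp at this highly asymmetric configuration and slack everywhere else; in particular the naive Cauchy--Schwarz estimate $\sum_v R_\ell(v)B_\ell(v)\le\sqrt{\sum_v R_\ell(v)^2}\sqrt{\sum_v B_\ell(v)^2}$ is far too weak. A more promising route is to pass to the graphon language and show, by a perturbation argument, that any extremizer of the polynomial functional $t(P_{E^{\ell}N^{\ell}},W)$ subject to $\int W = \beta$ must be $\{0,1\}$-valued; a subsequent classification of such extremal step-function graphons, together with a symmetrization argument collapsing them to bi-classes, would confirm $K(a,n)$ as the unique extremizer up to complementation. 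Carrying out the variational step rigorously for $\ell\ge 3$, where the functional has degree $2\ell$ in $W$ and a higher-dimensional space of critical points, is the main analytic difficulty and the core reason the statement remains conjectural.
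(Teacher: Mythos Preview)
The statement you are addressing is presented in the paper as a \emph{conjecture}; the paper offers no proof of it. The only supporting evidence the authors give is Theorem~\ref{thm:IPEEN} (the case $\ell=2$) together with the remark that ``flag algebras experiments suggest that $K(a,n)$ and its complement are also extremal constructions for $P_{E^3N^3}$.'' So there is no proof in the paper to compare your proposal against.

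Your write-up is itself explicitly a plan of attack rather than a proof, and you correctly flag the main obstruction at the end. A few concrete comments on the plan:

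\begin{itemize}
\item Your lower-bound computation via a $2\times 2$ transfer matrix for the blue tail in $K(a,n)$ is correct in spirit; this part is routine.
\item The decomposition $(\#P_{E^\ell N^\ell},G)=\sum_v R_\ell(v)B_\ell(v)+O(n^{2\ell})$ is fine, but the proposed induction on $\ell$ is where the real gap lies. Knowing the optimum for $\ell-1$ does not obviously control $\sum_v R_\ell(v)B_\ell(v)$: the recursion $R_\ell(v)=\sum_{u\sim_R v}R_{\ell-1}(u)$ couples $R_\ell$ to the \emph{distribution} of $R_{\ell-1}$ over neighbours, not just to its total, and the inductive hypothesis gives you no grip on that distribution. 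You would need a much stronger statement at level $\ell-1$ (e.g.\ the full joint profile of $(R_{\ell-1},B_{\ell-1})$ over vertices), which is essentially the conjecture itself.
\item The variational/graphon route you sketch (show extremizers are $\{0,1\}$-valued, then classify) is a reasonable heuristic, but proving that all critical points of a degree-$2\ell$ functional are step functions is a known hard problem even for $\ell=2$; the paper's proof of Theorem~\ref{thm:IPEEN} does not go this way but instead produces an explicit flag-algebra certificate. Extending such a certificate to general $\ell$ with coefficients depending polynomially on $a=\sqrt{\beta}$ would be the natural continuation, and it is genuinely open.
\end{itemize}

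In short: there is nothing to grade against, and your proposal is an honest outline of plausible strategies with the difficulties correctly identified, not a proof.
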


We next consider the case of monochromatic trees. Here we can dispense with mentioning colors and our problem is to simply maximize the number of copies of a given tree $T$ in a graph with density $\beta$. Even for the simple case when $T$ is a star, determining $I(T, \beta)$ is quite difficult. Indeed, depending on the value of $\beta$, the maximizers are $K(a,n)$ or the complement of $K(a,n)$ for the appropriate choice of $a$ that results in a graph of density $\beta$. This was proved by Reiher and Wagner~\cite{ReiherStar}.  We  generalize part of their result for arbitrary trees as long as $\beta$ is close to 1.

\begin{theorem} \label{thm:generaltrees} 
For every tree $T$ there exists $\beta_T \in (0,1)$ such that for $\beta>\beta_T$, the graph sequence $(G_n)$ where $G_n=K((1+o(1))\sqrt \beta \, n, n)$ achieves $I(T, \beta)$.
\end{theorem}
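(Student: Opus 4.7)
The plan is to reduce the theorem to a variational inequality in graphon language and prove it by a first/second-variation analysis at the clique graphon combined with a compactness argument.

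Let $v := |V(T)|$. Since $T$ is a tree with no isolated vertex, the difference between counting injective homomorphisms and all homomorphisms is $o(n^v)$, and by standard graphon correspondence $I(T,\beta) = \sup_W t(T,W)$, where
\[
t(T,W) \;=\; \int_{[0,1]^v} \prod_{uv \in E(T)} W(x_u, x_v)\, dx
\]
and $W$ ranges over symmetric measurable graphons $W\colon [0,1]^2 \to [0,1]$ with $\int W = \beta$. The candidate optimum is the clique graphon $W_0 := \mathbf{1}_{A \times A}$ with $|A| = \sqrt{\beta}$, for which $\int W_0 = \beta$ and $t(T,W_0) = |A|^v = \beta^{v/2}$; this corresponds exactly to $G_n = K((1+o(1))\sqrt\beta\, n, n)$ and gives the lower bound $I(T,\beta) \geq \beta^{v/2}$. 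It therefore suffices to prove the matching upper bound $\sup_W t(T,W) \leq \beta^{v/2}$ for $\beta > \beta_T$.

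The first step is to verify that $W_0$ is a strict local maximum of $t(T,\cdot)$ on the affine slice $\{\int W = \beta\}$. The functional derivative decomposes along edges: for each edge $e = uv$ of $T$, letting $T_u, T_v$ denote the two components of $T - e$, one has
\[
\frac{\partial t(T,W)}{\partial W(x,y)}\bigg|_{W_0} \;=\; \sum_{uv \in E(T)} t(T_u, W_0; u\mapsto x)\cdot t(T_v, W_0; v\mapsto y) \;=\; (v-1)\beta^{(v-2)/2}\,\mathbf{1}_{A\times A}(x,y),
\]
which is constant on $\operatorname{supp}(W_0)$ and vanishes off it --- exactly the KKT condition for a constrained maximum with multiplier $\lambda = (v-1)\beta^{(v-2)/2}$. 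The second variation is a polynomial in $\beta$ summed over ordered pairs of edges of $T$; verifying its strict negativity on the tangent space $\{\psi : \int \psi = 0\}$ for $\beta$ close to $1$ reduces to Ahlswede--Katona-type inequalities on $\sum_v d(v)^k$ applied to the components of $T-e$.

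The second step is to promote strict local optimality to global optimality by compactness in the cut-norm: any sequence of optimizers $(W_n)$ with densities $\beta_n \to 1$ converges subsequentially to a density-$1$ graphon, which must be the constant $\mathbf{1}$; hence $W_n \to W_0$ as $\beta_n \to 1$, and together with strict local maximality this forces any optimizer at density $\beta$ close enough to $1$ to coincide with $W_0$ almost everywhere, yielding the required $\beta_T < 1$. The main obstacle is making the second-variation analysis effective enough to pin down a concrete $\beta_T$; a hands-on alternative I would pursue in parallel is a leaf-removal induction on $v$: if $\ell$ is a leaf of $T$ with neighbor $p$, then $N(T,G) \leq \sum_\phi d_G(\phi(p))$ with $\phi$ ranging over injective homomorphisms of $T-\ell$, and one combines the inductive hypothesis on $T-\ell$ with a quantitative stability of the extremal structure (Ahlswede--Katona for the base case $v=3$, the Reiher--Wagner result cited above for stars, and the first-step analysis for larger trees).
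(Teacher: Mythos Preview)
The paper's proof is a two-line induction that sidesteps all of your analytic machinery. For a non-star tree $T$, pick an edge $e$ whose removal leaves two subtrees $T_1,T_2$, each with at least one edge (such an $e$ exists precisely because $T$ is not a star). Then trivially $(\#T,G)\le(\#T_1,G)\cdot(\#T_2,G)$ for every $G$; by induction both factors are maximized at $K(\sqrt\beta\,n,n)$ once $\beta>\max\{\beta_{T_1},\beta_{T_2}\}$; and on $K(\sqrt\beta\,n,n)$ the inequality is asymptotically an equality, since any disjoint pair of copies of $T_1,T_2$ lies entirely in the clique and can be joined along $e$. The base case (stars) is Reiher--Wagner. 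Your leaf-removal alternative is close in spirit but produces a degree-\emph{weighted} count of $T-\ell$ rather than a plain product, so the inductive hypothesis does not apply directly; removing a \emph{non}-leaf edge is exactly what makes the induction one line.

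Your primary approach has real gaps. The gradient of $t(T,\cdot)$ at $W_0$ does not vanish on $A\times A^c$: each leaf edge contributes when its leaf is placed outside $A$, giving $\tfrac{L}{2}\beta^{(v-2)/2}$ there ($L$ the number of leaves). KKT still holds since $L/2<v-1$ for $v\ge 3$, but your stated formula is wrong. More significantly, $W_0$ is $\{0,1\}$-valued and hence a corner of the box $\{0\le W\le 1\}$; feasible directions are sign-constrained, and with strict complementarity the first-order term is already strictly negative in every nonzero feasible direction---so the second-variation step is both misformulated (you wrote the wrong tangent space) and unnecessary for local optimality. The fatal gap is the compactness step: as $\beta_n\to 1$, both an arbitrary optimizer $W_n$ and the candidate $W_0(\beta_n)$ converge in $L^1$ to the constant graphon $\mathbf{1}$, so ``$W_n\to W_0$'' carries no information and cannot be fed into a local-maximality statement at $W_0(\beta_n)$. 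Closing this would require a quantitative expansion of $t(T,1-U)$ in the defect $U=1-W$, but the higher-order terms (e.g.\ $\int d_U(x)^2\,dx$ from adjacent edge pairs) are generically of the same order $1-\beta$ as the linear term, so the perturbative bookkeeping is not obviously easier than the original problem.
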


Note that we need $\beta>\beta_T$ for some $\beta_T$ in Theorem~\ref{thm:generaltrees}, since even for $T$ consisting of two edges and $\beta <1/2$, the complement of $K((1+o(1))\sqrt {1-\beta} n, n)$ has edge density $\beta$ and more copies of $T$ than $K((1+o(1))\sqrt \beta n, n)$.

If we consider $2$-edge-colored trees $T$, the problem appears to be much more complicated, perhaps even hopeless in general.
We pose the following question about $I(T, \beta)$ for certain values of $\beta$.

\begin{question}\label{ques:treegen}
    Let $T$ be a red/blue colored tree. Do there exist $\beta_T, \beta'_T$ with $0\le \beta_T<\beta'_T\le 1$ such that the following holds? Let $\beta \in [\beta_T, \beta'_T]$. Then $I(T, \beta)$  is asymptotically achieved by the following sequence of graphs $(G_n)$. The red subgraph graph $R$ of $G_n$ has the following form: there is a vertex partition $A \cup B=V(G_N)$, all vertices in $A$ have asymptotically the same  degree, all edges with one endpoint in $A$ and the other endpoint in $B$ are present, and $B$ is an independent set.
    \end{question}

It is possible that the minimum number of copies of $T$ in graphs with given density $\beta$ is also asymptotically achieved by the graphs in Question~\ref{ques:treegen}.
It was proved in~\cite{DHLMNPRS}
that 
when $T$ is monochromatic,  the minimum is achieved for regular graphs.

We conclude the introduction by stating a conjecture related to Question~\ref{ques:treegen} for the seemingly simple case of stars.  Let $\mathcal{S}$ be the class of graphs obtained from a regular graph by first adding universal vertices and then adding isolated vertices or by first adding isolated vertices and then adding universal vertices.
Notice that the initial regular graph may be a complete subgraph or a collection of isolated vertices.
Let $S_{a,b}$ be a star with $a$ edges (red) and $b$ non-edges (blue). 

\begin{question}\label{ques:stars}
Is it true that for each $a,b\geq 1$ and $\beta \in [0,1]$ there exists an $S_{a,b}$-good sequence $(G_n)$ with $G_n \in \mathcal{S}$ such that
\[
I(S_{a,b}, \beta) = \rho(S_{a,b},(G_n)) ?
\]
\end{question}
In Section~\ref{sec:stars} we answer Question~\ref{ques:stars} positively for $a=2, b=1$ and $\beta \ge 1/4$. More precisely, we prove that 

\begin{equation} \label{eqn:s21results}
I(S_{2,1}, \beta) = \begin{cases}
    \beta/4 & \quad\quad \text{if }\quad  1/4 \le \beta \le 1/2, \\
    \beta^2(1-\beta) &\quad\quad   \text{if } \quad 1/2 \le \beta \le\ 1.
\end{cases}
\end{equation}

As we have already mentioned, determining $I(H,\beta)$ includes determining $\max(H)$.
While determining $I(H,\beta)$ is wide open, $\max(H)$ has been determined for all 4-vertex graphs~\cite{bodnar2025} with the notable exception of a path with the pattern Edge-Edge-Nonedge. 
This seems to be a challenging open problem; see Problem 9.2 in~\cite{basit2025semiinducibilityproblem}
and Conjecture~1.1 in~\cite{bodnar2025}.

\begin{table}[]
    \centering
    \renewcommand{\arraystretch}{1.8}
    \begin{tabular}{|c|c|c|c|c|}
    \hline
       Graph $H$  &  $I(H,\beta)$   & Construction & Reference \\[1pt] \hline\hline
   $AP_4$  \Fe40{3 1 4 4 1 1}   &  $\beta^2(1 - \beta)$ \text{ if } $\beta \in [0,1]$  &   regular   & Theorem~\ref{thm:edge-nonedge-edge}    
\\[5pt] \hline
$D(s)$ \Fe60{1 1 1 1 4 4 1 1 1 4 1 3 1 1 4}
&
$\beta^{2s}(1 -\beta)$  if  $\beta \in [1-1/2s,1]$  & regular & Theorem~\ref{broom}
\\[5pt] \hline     
  $AC_4$   \Fe40{3 1 4 4 1 3}   &  $\beta^2(1 - \beta)$   \text{ if } $\beta \in \{1/k, 1-1/k\}$  &  regular   & Theorem~\ref{thm:altc4}    
\\[5pt] \hline
 $P_{EENN}$    \Fe50{1 1 1 4 3 1 1 3 1 4} & 
$      \begin{aligned}
            (1-\beta)^{3/2}-(1-\beta)^2  & \quad\text{if} \quad  \beta \in [0,1/2]\\
           \beta^{3/2}-\beta^2 & \quad\text{if} \quad  \beta \in [1/2,1]  
      \end{aligned}
      $
      &
\makecell{clique + isolated \\ clique $\cup$ isolated}
      &
      Theorem~\ref{thm:IPEEN}
\\[5pt]\hline
Tree $T$ & $\beta \in [\beta_T,1]$  & clique $\cup$ isolated &  Theorem~\ref{thm:generaltrees}
\\[5pt]\hline
     $S_{2,1}$     \Fe40{3 4 4 1 1 1} & 
     $
     \begin{aligned}
    \beta/4 &\quad \text{if}\quad  \beta\in[1/4,1/2] \\
    \beta^2(1-\beta) &\quad   \text{if} \quad \beta\in[1/2,1]    
\end{aligned}
$
&
\makecell{regular $\cup$ isolated \\ regular}
&
Section~\ref{sec:stars}
\\\hline
    \end{tabular}
    \caption{Summary of results from this paper. The depicted instance of $D(s)$ is $D(2)$.}
    \label{tab:summary}
\end{table}

Our results are summarized in Table~\ref{tab:summary}.
In Section~\ref{sec:AP4} we prove Theorem~\ref{thm:edge-nonedge-edge} describing $I(AP_4,\beta)$. 
Section~\ref{sec:doublestar} contains the proof of Theorem~\ref{broom}.
Theorem~\ref{thm:altc4}  and Question~\ref{conj:altC4} are discussed in Section~\ref{sec:AC4}.
Theorem~\ref{thm:IPEEN} is addressed in  Section~\ref{sec:PEENN} and 
Section~\ref{sec:generaltrees} contains Theorem~\ref{thm:generaltrees}.
Finally, Section~\ref{sec:stars} discusses possible extremal constructions $\mathcal{S}$ for stars.

\section{Alternating 3-edge paths}\label{sec:AP4}

We begin with a result of Nikiforov that is a very useful tool for counting $3$-edge paths in graphs. For completeness, we provide a proof.

\begin{lemma} [Nikiforov~\cite{Nikiforov2005}]\label{lem:nikiforov} 
Let $G=(V, E)$ be a graph on $n$ vertices with $m$ edges. Then
\[
\sum_{uv \in E} d_u d_v \;\ge\; \frac{4m^3}{n^2},
\]
where  $d_w$ denotes the degree of vertex $w$. Equality holds  iff $G$ is regular.
\end{lemma}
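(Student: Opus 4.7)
The plan is to prove the inequality by combining two applications of AM--GM, bridged by the elementary identity
$$\prod_{uv\in E} d_u d_v \;=\; \prod_{v\in V} d_v^{\,d_v},$$
which holds because each vertex $v$ contributes a factor $d_v$ to the left-hand product exactly once per incident edge, and $v$ has $d_v$ incident edges.

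First, I apply AM--GM to the $m$ edge-values $d_ud_v$:
$$\prod_{uv\in E} d_u d_v \;\le\; \bigg(\frac{1}{m}\sum_{uv\in E} d_u d_v\bigg)^{\!m}.$$
Second, I apply the weighted GM--HM inequality to the values $(d_v)_{v\in V}$ with weights $(d_v)_{v\in V}$, restricted to vertices of positive degree. With $W = \sum_v d_v = 2m$, this gives
$$\bigg(\prod_v d_v^{\,d_v}\bigg)^{\!1/(2m)} \;\ge\; \frac{W}{\sum_{v:\,d_v>0} d_v/d_v} \;=\; \frac{2m}{|\{v:d_v>0\}|} \;\ge\; \frac{2m}{n}.$$
Substituting the identity into the first bound and chaining with the second yields
$$\bigg(\frac{1}{m}\sum_{uv\in E} d_u d_v\bigg)^{\!m} \;\ge\; \prod_v d_v^{\,d_v} \;\ge\; \bigg(\frac{2m}{n}\bigg)^{\!2m},$$
and extracting $m$-th roots rearranges to the desired inequality $\sum_{uv\in E} d_u d_v \ge 4m^3/n^2$.

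For the equality case, one just needs to trace both AM--GM steps. Equality in the edge-level AM--GM forces $d_u d_v$ to be constant across all edges $uv\in E$, while equality in the weighted GM--HM forces all positive degrees to coincide and there to be no isolated vertices (unless the graph is empty). Both conditions simultaneously are equivalent to $G$ being regular.

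The main conceptual obstacle is finding the right setup: a direct Cauchy--Schwarz on $\sum d_u d_v$ tends to yield only weaker bounds such as $\sum d_u d_v \ge 2m^2/n$, and attempts to invoke Chebyshev-type inequalities between the sequences $(d_v)$ and $(S(v))$ with $S(v)=\sum_{u\sim v}d_u$ fail because these two sequences need not be similarly ordered. The trick of converting the edge product $\prod_{uv\in E}d_u d_v$ into the vertex product $\prod_v d_v^{d_v}$ is the bridge that allows the degree-weighted HM bound to interact cleanly with the edge-indexed AM bound and produces the tight $m^3/n^2$ rate.
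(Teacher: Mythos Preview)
Your proof is correct and takes a genuinely different route from the paper's. The paper first bounds $U:=\sum_{uv\in E}(d_ud_v)^{-1/2}\le n/2$ via the edge-level AM--GM $1/\sqrt{d_ud_v}\le\tfrac12(1/d_u+1/d_v)$, then applies Cauchy--Schwarz to obtain $\sum_{uv\in E}\sqrt{d_ud_v}\ge m^2/U\ge 2m^2/n$, and finishes with QM--AM to pass from $\sum\sqrt{d_ud_v}$ to $\sum d_ud_v$. Your argument instead hinges on the neat identity $\prod_{uv\in E}d_ud_v=\prod_v d_v^{d_v}$, which lets a single AM--GM on the edge values and a single weighted GM--HM on the degree values meet in the middle; the paper never uses this identity. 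Conceptually the two proofs share the same ``counting $1/d_v$ over incident edges yields $1$'' mechanism---it is what produces $n$ in both the paper's bound $U\le n/2$ and your harmonic-mean denominator---but your packaging is more compact: two power-mean inequalities rather than three distinct inequalities, and no auxiliary passage through $\sum\sqrt{d_ud_v}$. The paper's intermediate bound $\sum_{uv\in E}\sqrt{d_ud_v}\ge 2m^2/n$ may be of independent interest, while your identity-based bridge generalises more readily (e.g., to bound $\sum_{uv\in E}(d_ud_v)^t$ for other exponents $t$). Your equality analysis is also correct; note that the GM--HM step alone already forces regularity, so the edge-level AM--GM equality condition is redundant once the other holds.
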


\begin{proof}
 We may assume $G$ has no isolated vertices. Set
\[
S:=\sum_{uv\in E} d_u d_v,\qquad
U:=\sum_{uv\in E}\frac{1}{\sqrt{d_u d_v}}.
\]
First note that for every edge $uv$,
\[
\frac{1}{\sqrt{d_u d_v}} \le \frac{1}{2}\!\left(\frac{1}{d_u}+\frac{1}{d_v}\right).
\]
Summing over all edges gives
\[
U \le \frac12\sum_{uv\in E}\Big(\frac{1}{d_u}+\frac{1}{d_v}\Big)
= \frac12\sum_{u} d_u\cdot\frac{1}{d_u} = \frac{n}{2}.
\]
For each of the $m$ edges $uv$, let $x_{uv} = (d_ud_v)^{1/4}$ and $y_{uv} = (d_ud_v)^{-1/4}$.
Let $x=(x_{uv})$ and $y=(y_{uv})$ be the corresponding vectors.   
Applying the Cauchy-Schwarz Inequality  $|x \cdot y| \le ||x||\cdot  ||y||$  to the vectors $x$ and $y$, we obtain
\begin{align*}
m^2\ &= \Big( \sum_{uv\in E} x_{uv}\cdot y_{uv}\Big)^2\ =\  |x \cdot y|^2\ \le\  
||x||^2 \cdot ||y||^2
=\Big(\sum_{uv\in E} x_{uv}^2\Big)\cdot 
\Big(\sum_{uv\in E} y_{uv}^2\Big)\\ &=\  \Big(\sum_{uv\in E}\sqrt{d_u d_v}\Big)\cdot \Big(\sum_{uv\in E}\frac{1}{\sqrt{d_u d_v}}\Big).
\end{align*}

Using $U\le n/2$, we obtain
\[
\sum_{uv\in E}\sqrt{d_u d_v} \ge \frac{m^2}{U} \ge \frac{2m^2}{n}.
\]
Combining this with an application of  the inequality between quadratic and arithmetic means  gives
\[
S=\sum_{uv\in E}d_u d_v \ge \frac{1}{m}\Big(\sum_{uv\in E}\sqrt{d_u d_v}\Big)^2 \ge \frac{1}{m}\left(\frac{2m^2}{n}\right)^{\!2}
= \frac{4m^3}{n^2},
\]
which completes the proof. Equality holds only if all the $d_u$ are the same due to our applications of  the arithmetic/quadratic mean inequality.
\end{proof}

Recall that $AP_4$ is  the alternating path of length three. A copy of $AP_4$ in $G$ is a (colored) subgraph of $G$ isomorphic to $AP_4$, and hence $(\#AP_4, G)$ is twice the number of copies of $AP_4$ in $G$. Using Lemma~\ref{lem:nikiforov}, we obtain an upper bound on the number of copies of $AP_4$ in a  graph with given edge density that is tight for regular graphs. This completely determines the profile for the number of $AP_4$ in a graph. Consequently,  Theorem~\ref{thm:edge-nonedge-edge}, which states  $I(AP_4, \beta) = \beta^2(1-\beta)$,  is an immediate consequence of Theorem~\ref{thm:D(1)} below. 

\begin{theorem} \label{thm:D(1)}
    Let $G=(V, E)$ be a graph with $m$ edges. Then
    $$\sum_{uv \not\in E}d_ud_v  \le 2m^2-\frac{2m^2}{n} - \frac{4m^3}{n^2}.$$
    Equality holds only if $G$ is regular. Moreover, the number of copies of $AP_4$ in $G$ is at most 
    $$2m^2-\frac{2m^2}{n} - \frac{4m^3}{n^2}-t,$$
     where $t$ is the number of $3$-element vertex sets in $G$ spanning exactly two edges.
\end{theorem}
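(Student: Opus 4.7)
The plan is to derive both parts by direct double counting combined with the preceding lemma. I start from the elementary identity
\[
\sum_{\{u,v\}: u \ne v} d_u d_v \;=\; \tfrac{1}{2}\Bigl(\bigl(\textstyle\sum_v d_v\bigr)^{\!2} - \sum_v d_v^{\,2}\Bigr) \;=\; 2m^2 - \tfrac{1}{2}\sum_v d_v^{\,2},
\]
split the sum according to whether the unordered pair $\{u,v\}$ is an edge or a non-edge, and rearrange to get
\[
\sum_{uv \notin E} d_u d_v \;=\; 2m^2 \;-\; \tfrac{1}{2}\sum_v d_v^{\,2} \;-\; \sum_{uv \in E} d_u d_v.
\]

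For the first assertion I invoke two lower bounds on the two subtracted terms. Cauchy--Schwarz gives $\sum_v d_v^{\,2} \ge (\sum_v d_v)^2/n = 4m^2/n$, and Lemma~\ref{lem:nikiforov} gives $\sum_{uv \in E} d_u d_v \ge 4m^3/n^2$. Both inequalities attain equality precisely when $G$ is regular, so substitution produces
\[
\sum_{uv \notin E} d_u d_v \;\le\; 2m^2 - \frac{2m^2}{n} - \frac{4m^3}{n^2},
\]
with equality only if $G$ is regular.

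For the moreover claim I count injections $f:V(AP_4)\to V(G)$ by their middle non-edge. For an ordered non-edge $(b,c)$ the valid endpoints are $a\in N(b)$ and $d\in N(c)$ with $a\ne d$; the conditions $a\ne c$ and $d\ne b$ are automatic from $bc\notin E$. Hence the number of such $(a,d)$ equals $d_b d_c - |N(b)\cap N(c)|$, and summing over ordered non-edges gives
\[
(\#AP_4, G) \;=\; 2\!\!\sum_{\{b,c\}: bc \notin E}\!\!\bigl(d_b d_c - |N(b)\cap N(c)|\bigr).
\]
Since $AP_4$ has exactly one non-trivial automorphism, the number of copies of $AP_4$ in $G$ equals $(\#AP_4,G)/2 = \sum_{\{b,c\}: bc\notin E} d_b d_c \;-\; t$, where by definition $t = \sum_{\{b,c\}: bc\notin E} |N(b)\cap N(c)|$ is exactly the number of $3$-element vertex sets spanning two edges. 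Combining with the first inequality proves the claim.

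The main analytic content comes from Lemma~\ref{lem:nikiforov}, which has already been established. The only step requiring care is the bookkeeping: ensuring the ordered-versus-unordered distinction is handled correctly both in the identity for $\sum d_u d_v$ and in the injection count, and verifying that Cauchy--Schwarz and Nikiforov attain equality under the same hypothesis of regularity so that the ``equality only if'' conclusion survives the two substitutions.
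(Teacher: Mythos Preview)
Your proof is correct and follows essentially the same approach as the paper: the same splitting identity for $\sum d_u d_v$, the same two lower bounds (Cauchy--Schwarz/convexity on $\sum d_v^2$ and Lemma~\ref{lem:nikiforov} on $\sum_{uv\in E} d_u d_v$), and the same observation that $\sum_{uv\notin E} d_u d_v$ equals the number of copies of $AP_4$ plus $t$. Your bookkeeping for the injection count is slightly more explicit than the paper's, but the argument is the same.
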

\begin{proof}
    $$2m^2-\frac{1}{2}\sum_{v} d_v^2=\frac{1}{2}\left(\sum_{v} d_v\right)^2 - \frac{1}{2}\sum_{v} d_v^2=\sum_{uv \in \binom{V}{2}} d_ud_v= \sum_{uv \not\in E} d_ud_v +\sum_{uv \in E}d_ud_v.$$
Rearranging gives
$$\sum_{uv \not\in E} d_ud_v = 2m^2 - \left( \frac{1}{2}\sum_v d_v^2 +\sum_{uv \in E}d_ud_v\right).$$
Now, if $n$ and $m$ are fixed,  then $\sum_{uv \not\in E} d_ud_v$ is maximized when $(\sum_v d_v^2)/2 +\sum_{uv \in E}d_ud_v$ is minimized. The first quantity is minimized only for regular graphs by convexity, and the second quantity is also minimized only for regular graphs due to Lemma~\ref{lem:nikiforov}. Therefore $\sum_{uv \not\in E} d_ud_v$ is maximized only for regular graphs. The statement about $AP_4$ follows by observing that $\sum_{uv \not\in E}d_ud_v$ is the number of copies of $AP_4$ together with the number of $3$-element sets with exactly two edges (if the neighborhoods of $u$ and $v$ intersect).
\end{proof}

\subsection{Density of alternating paths of length $3$ via flag algebras}

Semi-inducibility problems are approachable by flag algebras as demonstrated by Chen and Noel~\cite{chen2025} and Bodn\'ar and Pikhurko~\cite{bodnar2025}.
Before finding the proof of Theorem~\ref{thm:edge-nonedge-edge} presented above, we had
  determined $I(H,\beta)$ for $1/2\le \beta \le 1$
   using flags on at most $4$ vertices.
Numerical experiments with several fixed values of $\beta < 1/2$ 
indicate that flag algebras may also be used  for $\beta < 1/2$. 
However, the proof would need flags on at least $6$ vertices instead of $4$.
We expect such a potential flag algebra proof parametrized by $\beta$ on 6 vertices to be significantly more complicated. 
After discovering the proof for the entire range without using flag algebras, we did not further attempt to find such a flag algebra proof.
We still include the easy flag algebra proof for $\beta\ge 1/2$ here for readers familiar with the method, but we skip the introduction to the standard notation to save space.

\begin{theorem}\label{thm:AP4flags}
$I(AP_4, \beta) = \beta^2(1-\beta)$ for $\beta \in [1/2,1]$.
\end{theorem}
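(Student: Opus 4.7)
The plan is to set up a sum-of-squares certificate in the standard flag algebra framework for $2$-edge-colored complete graphs, using flags on at most $4$ vertices. There are $11$ isomorphism classes of $4$-vertex $2$-edge-colored cliques. Both the edge density $[E]$ and $[AP_4]$ are expressible as linear combinations of the densities of these $11$ flags via direct subgraph counting, and $[E]^2(1-[E])$ can be written as a cubic polynomial in the same flag densities using the standard averaging identities (so that the inequality to be proved lives entirely in the $4$-vertex flag algebra).

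To produce the certificate, I consider the types $\sigma$ on at most two labeled vertices: the single-vertex type, the labeled edge, and the labeled non-edge. For each $\sigma$, I list the $\sigma$-flags on one additional unlabeled vertex, and form positive semidefinite quadratic expressions $\sum_{i,j} Q^\sigma_{ij}\,[F^\sigma_i]_\sigma [F^\sigma_j]_\sigma \geq 0$ for PSD matrices $Q^\sigma$. Applying the unlabeling operator $\llbracket \cdot \rrbracket_\sigma$ converts each such expression into a non-negative combination of $4$-vertex flag densities, so the target identity takes the form
\[
[E]^2(1-[E]) - [AP_4] \;=\; \sum_\sigma \llbracket \langle Q^\sigma,\, F^\sigma (F^\sigma)^\top \rangle \rrbracket_\sigma \;+\; \mu\,(2[E]-1)\cdot g,
\]
where $g$ is a non-negative combination of $3$-vertex flag densities and $\mu\geq 0$. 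The slack term $\mu(2[E]-1)$ is non-negative precisely when $[E]\geq 1/2$, which is how the hypothesis on $\beta$ enters the argument in a Lagrangian manner.

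I would then solve the resulting semidefinite program numerically and round the solution to exact rational PSD matrices. The extremal $\beta$-regular construction saturates the bound $\beta^2(1-\beta)$ at every $\beta\in[1/2,1]$, which pins down the zero eigenvectors of each $Q^\sigma$ along a one-parameter family, identifies the support pattern of the $Q^\sigma$'s, and makes the rounding tractable.

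The main obstacle is the rounding step: producing rational PSD matrices whose entries make every coefficient of a $4$-vertex flag density on the right-hand side exactly match the corresponding coefficient on the left, without losing positivity. This is standard but technically delicate. Numerical experiments indicate that $4$-vertex flags suffice only when $\beta\geq 1/2$; for smaller densities the corresponding SDP becomes infeasible at this level and at least $6$-vertex flags are required, which explains the restriction to $[1/2,1]$ in the statement.
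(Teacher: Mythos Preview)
Your proposal is in the right framework (flag algebras on $4$-vertex flags, sum-of-squares certificate), but the way you set up the identity cannot work as written. The cubic $[E]^2(1-[E]) = [E]^2 - [E]^3$ does \emph{not} live in the $4$-vertex flag algebra: over the empty type the product $[E]^3$ is a combination of flags on $2+2+2=6$ vertices, and there is no identity reducing it to $4$-vertex flags. For the same reason your slack term $\mu(2[E]-1)\cdot g$, with $g$ a $3$-vertex flag combination, is a $5$-vertex expression. So the certificate you describe cannot be balanced against $4$-vertex coefficients.

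The paper avoids this by treating $\beta$ (equivalently $\alpha=1-\beta$) as a \emph{parameter}, not as a flag. The target becomes the constant $24\alpha(1-\alpha)^2$, and the density is enforced by the linear Lagrange term $E=6(\,[\text{non-edge}]-\alpha)$, which vanishes on every graph of the correct density and therefore may carry a coefficient of either sign. The certificate then reads
\[
O \;+\; p_1(\alpha)\,C_1 \;+\; p_2(\alpha)\,C_2 \;+\; p_3(\alpha)\,C_3 \;+\; p_4(\alpha)\,C_4 \;+\; q(\alpha)\,E \;=\; 24\alpha(1-\alpha)^2,
\]
where each $C_i$ is an explicit rank-one square over a two-vertex type and the $p_i(\alpha)$ are explicit polynomials. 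The hypothesis $\beta\ge 1/2$ enters precisely here: one of the multipliers is $p_4(\alpha)=-4\alpha+2$, which is non-negative exactly when $\alpha\le 1/2$. This is the correct mechanism, not a Lagrangian slack in $[E]$. Note also that the paper writes down the four squares and their coefficients by hand; there is no numerical SDP to round, and the ``one-parameter family of zero eigenvectors'' issue you anticipate is already resolved by letting the $C_i$ themselves depend on $\alpha$.
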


\begin{proof}
Recall that in the definition of $I(AP_4,\beta)$ the subgraph count is scaled by $n^4$. However, the flag algebra densities of $4$-vertex subgraphs are scaled by $\binom{n}{4}$. Hence our aim is to prove an upper bound $24\beta^2(1-\beta)$. 
In flag algebras, the number of $AP_4$  can be written as
\[
O := 4 \Fuuuu222122 + 4 \Fuuuu222112 + 8 \Fuuuu221122 + 6 \Fuuuu221112 + 8 \Fuuuu211112.
\]
Our goal is to show $O \leq 24\beta^2(1-\beta)$.
Let $\alpha = 1-\beta$ be the density of non-edges.
We will combine $O$ with a linear combination of the following four squares and an expression fixing the density of non-edges. 
\begin{align*}
C_1 &:= \Bigg\llbracket  \Bigg(\Fllu221   -\Fllu212\Bigg)  ^{2} \Bigg\rrbracket\\
C_2 &:= \Bigg\llbracket  \Bigg(\Fllu121   -\Fllu112\Bigg)  ^{2} \Bigg\rrbracket\\
C_3 &:=  12\Bigg\llbracket  \Bigg(  -\alpha \Fllu222 +  (1-2\alpha) \Fllu221 + (1-\alpha) \Fllu211 \Bigg)  ^{2} \Bigg\rrbracket\\
C_4 &:=  12\Bigg\llbracket  \Bigg(  - \alpha \Fllu122 +  (1-2\alpha) \Fllu121 + (1-\alpha) \Fllu111 \Bigg)  ^{2} \Bigg\rrbracket\\
E &:= 6\left(\Fuu1 - \alpha\right)
\end{align*}

Notice that all five terms are non-negative. 
They can be expanded as linear combinations of $4$-vertex graphs. The coefficients of these linear combinations when expanded on $4$-vertex flags are in Table~\ref{tab:FA}.

\begin{table}[h!]
\begin{center}

\begin{tabular}{ccccccc}
 & O & $C_1$ & $C_2$ & $C_3$ & $C_4$ & $E$ \\
\Fuuuu222222 &
0 &
0 &
0 &
$12\alpha^2$ &
0 &
$0 -6\alpha$
\\[10pt]
\Fuuuu222221 &
4 &
0 &
0 &
$10\alpha^2-4\alpha$ &
$2\alpha^2$ &
$1 -6\alpha$
\\[10pt]
\Fuuuu222211 &
4 &
1/6 &
0 &
$10\alpha^2-8\alpha+1$ &
$4\alpha^2 - 2\alpha$ &
$2 -6\alpha$
\\[10pt]
\Fuuuu221211 &
0 &
0 &
1/2 &
$6\alpha^2 - 6\alpha$ &
$12\alpha^2-12\alpha+3$ &
$3 -6\alpha$
\\[10pt]
\Fuuuu222111 &
0 &
1/2 &
0 &
$12\alpha^2-12\alpha+3$ &
$6\alpha^2-6\alpha$ &
$3 -6\alpha$
\\[10pt]
\Fuuuu221122 &
8 &
$-2/3$ &
0 &
0 &
$4\alpha^2$ &
$2 -6\alpha$
\\[10pt]
\Fuuuu221121 &
6 &
$-1/6$ &
$-1/6$ &
$4\alpha^2-6\alpha+2$ &
$4\alpha^2-2\alpha$ &
$3 -6\alpha$
\\[10pt]
\Fuuuu221111 &
0 &
0 &
1/6 &
$4\alpha^2-6\alpha+2$ &
$10\alpha^2 - 12\alpha + 3$ &
$4 -6\alpha$
\\[10pt]
\Fuuuu121121 &
8 &
0 &
$-2/3$ &
$4\alpha^2 - 8\alpha + 4$ &
0 &
$4 -6\alpha$
\\[10pt]
\Fuuuu121111 &
0 &
0 &
0 &
$2\alpha^2-4\alpha+2$ &
$10\alpha^2 - 16\alpha + 6$ &
$5 -6\alpha$
\\[10pt]
\Fuuuu111111 &
0 &
0 &
0 &
0 &
$12\alpha^2-24\alpha+12$ &
$6 -6\alpha$
\end{tabular}
\end{center}
\caption{Coefficients of expansions into unlabeled graphs on 4 vertices.}\label{tab:FA}
\end{table}

The following linear combination yields that the coefficient for each graph on $4$ vertices is equal to $24\alpha(1-\alpha)^2$. 
\begin{align*}
O  
+ (48\alpha^3 - 96\alpha^2 + 48\alpha)C_1   
+ (48\alpha^3 - 72\alpha^2 + 24\alpha + 12)C_2 
+ (-4\alpha + 4)C_3 \\
+ (-4\alpha + 2)C_4
+ (-12\alpha^2 + 16\alpha - 4)E
= 24\alpha(1-\alpha)^2.
\end{align*}
Notice that the proof works  only  for $0 \leq \alpha \leq 1/2$
because the factors for $C_1, C_2, C_3$ and $C_4$ in the linear combination need to be non-negative,
however, the factor for $C_4$ is negative for $\alpha > 1/2$.
Since $\alpha = 1-\beta$, the result follows.
\end{proof}

\section{Density of alternating double stars}\label{sec:doublestar}
In this section we prove Theorem~\ref{broom}. This follows from the following theorem.
 Recall, that for an integer $s$,  the alternating double-star $D(s)$ 
is the following labeled (and ordered) graph on $2s+2$ vertices: $V(D(s))=\{v_1,\ldots, v_s, v, u, u_1,\ldots, u_s\}$, and $E(D(s))=\{v_1v,\ldots, v_sv, uu_1,\ldots, uu_s\}$ and $uv\not\in E(D(s))$ (see Figure~\ref{fig:S5}).

\begin{theorem} Fix $s \ge 1$. Then \begin{equation}\label{t31}\max(D(s)) \le \frac{(2s)^{2s}}{(2s+1)^{2s+1}}.
\end{equation}
If  $G$ is an $n$-vertex graph with average degree  $d\ge (1-1/(2s))n$, then $(\#D(s), G)\le (n-d)\cdot n\cdot  d^{2s}$. 
\end{theorem}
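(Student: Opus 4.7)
The plan is to bound $(\#D(s),G)$ in two stages: first reduce the injective count to a sum of vertex degrees via AM--GM, and then maximize the resulting sum under the average-degree constraint using a concave-hull argument. For phase one, fix an injection realizing a copy of $D(s)$: its two centers are mapped to an ordered non-edge $(u,v)\in V(G)^2$, and each center supplies an ordered $s$-tuple of distinct neighbors. Dropping the distinctness constraint on the leaves only inflates the count, so
\[
(\#D(s),G) \;\le\; \sum_{\substack{(u,v)\in V(G)^2 \\ u\ne v,\; uv\notin E(G)}} d_u^s\,d_v^s.
\]
Symmetrizing through $d_u^s d_v^s \le \tfrac12(d_u^{2s}+d_v^{2s})$ collapses this to
\[
(\#D(s),G) \;\le\; \sum_{u\in V(G)} e_u\,d_u^{2s},\qquad e_u := n-1-d_u.
\]

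For phase two I maximize $\sum_u f(d_u)$ with $f(x)=(n-1-x)\,x^{2s}$ subject to $\sum_u d_u = nd$. A short computation gives $f''(x)\le 0$ iff $x \ge (n-1)(2s-1)/(2s+1)$, so $f$ is convex on an initial interval and concave near $n-1$; a direct Jensen application would therefore require every $d_u$ to lie in the concave regime, which the hypothesis does not guarantee. To sidestep this, I use the concave hull $\hat f$ of $f$ on $[0,n-1]$: the tangent line from the origin to the graph of $f$ touches it at $x_0 := (n-1)(2s-1)/(2s)$, the positive solution of $f(x_0)/x_0 = f'(x_0)$; it follows that $\hat f$ equals the linear function $L(x) = (f(x_0)/x_0)\,x$ on $[0,x_0]$ and equals $f$ on $[x_0,n-1]$. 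Since $\hat f\ge f$ and $\hat f$ is concave, Jensen gives
\[
\sum_u f(d_u) \;\le\; \sum_u \hat f(d_u) \;\le\; n\,\hat f(d).
\]

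The hypothesis $d \ge (1-1/(2s))n$ exceeds $x_0 = (n-1)(2s-1)/(2s)$, placing $d$ in the regime where $\hat f=f$; hence $\hat f(d)=(n-1-d)d^{2s}$ and $(\#D(s),G)\le n(n-1-d)d^{2s}\le (n-d)\cdot n\cdot d^{2s}$, proving the second assertion. For $\max(D(s))$ I write $d=\beta n$ and optimize over $\beta\in[0,1]$. For $\beta \ge (2s-1)/(2s)$ the bound becomes $\beta^{2s}(1-\beta)$, maximized at $\beta = 2s/(2s+1)$ with value $(2s)^{2s}/(2s+1)^{2s+1}$. For $\beta < (2s-1)/(2s)$ the hull bound is linear in $d$, so its supremum over this range is attained at the boundary $\beta = (2s-1)/(2s)$, where it evaluates to $(1/(2s))((2s-1)/(2s))^{2s}$; a routine inequality equivalent to $(1+1/(2s))^{2s+1}(1-1/(2s))^{2s}\le 1$ shows this value is dominated by $(2s)^{2s}/(2s+1)^{2s+1}$, yielding~\eqref{t31}. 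The main technical subtlety is the concave-hull construction, which repairs the failure of global concavity of $f$ and makes the bound agree with $f(d)$ exactly at the threshold $d\ge (1-1/(2s))n$.
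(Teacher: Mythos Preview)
Your proof is correct and follows the same overall scheme as the paper: the reduction $(\#D(s),G)\le\sum_u (n-1-d_u)d_u^{2s}$ via AM--GM on $d_u^sd_v^s$ is identical, and your concave-hull-plus-Jensen step is the dual formulation of the paper's Lemma~3.2 (which instead characterises the extremal degree sequence as having all entries in $\{0,\alpha\}$ for some $\alpha$ past the inflection point). The tangent-from-origin point $x_0=(n-1)(2s-1)/(2s)$ you compute is exactly the threshold the paper reaches by optimising $d\cdot\alpha^{2s-1}(1-\alpha)$ over $\alpha\ge d/n$; the two arguments are equivalent, though your phrasing is arguably more direct since it avoids stating a separate optimisation lemma.

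One minor remark: for the global bound~\eqref{t31} the paper uses a one-line AM--GM on each term, namely $d_u^{2s}(2sn-2sd_u)\le(2sn/(2s+1))^{2s+1}$, which is shorter than your case analysis. Also, your case $\beta<(2s-1)/(2s)$ is unnecessary once you observe that the boundary value $(1/(2s))((2s-1)/(2s))^{2s}$ is just $\beta^{2s}(1-\beta)$ evaluated at $\beta=(2s-1)/(2s)$, hence automatically bounded by the maximum of that function; the separate inequality $(1+1/(2s))^{2s+1}(1-1/(2s))^{2s}\le 1$, while true, is not needed.
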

\begin{proof}
Let $G$ have  average degree $d$. 
We have
\begin{equation}
\label{degsum} 
(\#D(s), G) \le 2\sum_{uv\notin E} d_u^{s}d_v^{s}
\le  \sum_{uv\notin E}\left( d_u^{2s}+ d_v^{2s} \right)=\sum_{u} d^{2s}_u\cdot (n-1-d_u)<\sum_{u} d^{2s}_u\cdot (n-d_u). 
\end{equation}

It is easy to give an upper bound:

\begin{equation}\label{degglob} 
\sum_{u} d^{2s}_u\cdot (n-d_u)\le \frac{1}{2s}\sum_{u} d^{2s}_u\cdot (2sn-2sd_u)\le \frac{n}{2s}
\Big(\frac{2sn}{2s+1}\Big)^{2s+1} =\frac{(2s)^{2s}}{(2s+1)^{2s+1}}n^{2s+2},
\end{equation}
yielding $\max(D(s)) = (2s)^{2s}\cdot(2s+1)^{-2s-1}$.

To give an upper bound on the sum in~\eqref{degsum}, as a  function of the density of the graph, we use a well-known general optimization trick (and provide its proof).

\begin{lemma}\label{opt}
For an  $\alpha>0$ define  $X_{\alpha}$ to be  the set of vectors $(x_1, \ldots, x_n)$ such that $x_i \in \{0, \alpha\}$  for all $i$ except for possibly one coordinate.
    Suppose that $0<\gamma<1$ and $f: [0,1] \to \mathbb {R}$  is convex in $[0,\gamma]$ and concave in $[\gamma,1]$. Then  with the restriction that $x_i\in [0,1]$ and
    $\sum x_i = D$ for some $0\le D\le n$, $
    \sum_{i=1}^n f(x_i)$ is maximized at some element of $X_{\alpha}$, for some $\alpha\ge \gamma$.
\end{lemma}
\begin{proof} Suppose that $x=(x_1, \ldots, x_n)$ maximizes the sum subject to $\sum x_i=D$. Then if
  there were $0<x_i\le x_j<\gamma$, then we can decrease $x_i$ and increase $x_j$ with the same amount to increase the sum $\sum_{i=1}^n f(x_i)$ (by convexity), so there are no two such $i,j$.
Similarly, if 
$\gamma\le x_i<x_j$ for some $i,j$, then  we can increase $x_i$ and decrease $x_j$ to increase the sum (by concavity), hence there are no two such $i,j$, i.e., all $x_i\ge \gamma$ are equal to each other (and we define $\alpha\ge \gamma$ to be their value, setting $\alpha =\gamma$ if there is no $x_i\ge \gamma$), and all but at most one of the remaining variables are $0$.  We conclude that $x \in X_\alpha$ as claimed. 
\end{proof}
We apply Lemma~\ref{opt}
with $f(x)=(1-x)x^{2s}= x^{2s}-x^{2s+1}$. Using $$f'(x)=2s\cdot  x^{2s-1}-(2s+1)\cdot x^{2s}\quad \quad \text{ and }  \quad \quad f''(x)=2s(2s-1)\cdot  x^{2s-2}-2s(2s+1)\cdot x^{2s-1},$$
we have $f''(x)>0$ iff $\frac{2s-1}{2s+1}> x$, i.e., we can choose $\gamma=\frac{2s-1}{2s+1}$ and $D=d$. Note that we define $x_u=d_u/n$.
Using Lemma~\ref{opt}, there is an $\alpha> \gamma$ such that the maximum of $f$ is achieved when $m$ variables are $\alpha$, (for some $m$) at most one is between $0$ and $\gamma$, and the rest of the variables are $0$. Observe that $\alpha\ge d/n$ also holds. 
Ignoring that one outlier variable, whose role is negligible for large $n$, we have that $\alpha m = d=D$,
and we have the following  upper bound from Lemma~\ref{opt}:
\[
\sum_{i=1}^nf(x_i) \leq
m  \cdot f(\alpha) 
=
\frac{d}{\alpha}(
\alpha^{2s}-\alpha^{2s+1}
).
\]
This implies the following upper bound on \eqref{degsum}, which essentially proves~\eqref{t31}:
 
\[ n^{2s+1}\frac{d}{\alpha}\left(\alpha^{2s}-\alpha^{2s+1}\right)= n^{2s+1}\cdot d\cdot \alpha^{2s-1}\cdot (1-\alpha). \]
Now assume that  $d\ge (1-1/(2s))n$.
Then $\alpha\ge d/n \ge (2s-1)/(2s)$, and using that $\alpha^{2s-1}\cdot (1-\alpha)$ is decreasing when $\alpha> (2s-1)/2s$, we have   

\begin{equation} \label{eqn:prop3.3}
n^{2s+1}\cdot d\cdot \alpha^{2s-1}\cdot (1-\alpha)\le n\cdot (n-d)\cdot d^{2s} \le  n^{2s+2}\cdot  \frac{(2s)^{2s}}{(2s+1)^{2s+1}}. 
\end{equation}
\end{proof}

When $s=1$, Theorem~\ref{thm:edge-nonedge-edge} shows that the stronger bound $d^2(n-d)n$ holds for every $d$, while a similar statement does not hold for $s\ge 2$, as  a smaller regular  graph with additional isolated vertices contains more copies of $D(s)$ than a regular graph.

We shall later use the following proposition which is the $s=1$ case of~\eqref{degsum} and the first inequality in (\ref{eqn:prop3.3}).

\begin{proposition}\label{degree_sum}
Let $G$ be a graph with degree sequence $d_1,\ldots, d_n,$ and average degree $d\ge n/2$. Then 
$$\sum_i d_i^2(n-d_i)\le nd^2(n-d).$$
\end{proposition}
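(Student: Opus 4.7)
My plan is to avoid Lemma \ref{opt} entirely (it only yields the bound up to a negligible outlier contribution, which is not quite enough for an exact inequality) and instead prove the statement by a direct algebraic expansion around the mean. Let $\phi(x) := x^2(n-x) = nx^2 - x^3$, so the target is $\sum_i \phi(d_i) \le n\phi(d)$. First I would set $e_i := d_i - d$, so that $\sum_i e_i = 0$, and then expand
$$\sum_i d_i^2 = nd^2 + \sum_i e_i^2, \qquad \sum_i d_i^3 = nd^3 + 3d\sum_i e_i^2 + \sum_i e_i^3,$$
which combines to the identity
$$\sum_i \phi(d_i) - n\phi(d) \;=\; (n - 3d)\sum_i e_i^2 \;-\; \sum_i e_i^3.$$

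The task then reduces to showing that the right hand side is non-positive, i.e., that $\sum_i e_i^2(e_i + 3d - n) \ge 0$. This is the step where the hypothesis $d \ge n/2$ must enter, and the key observation is the algebraic identity $e_i + 3d - n = d_i + 2d - n$. Since $d_i \ge 0$ for every $i$ and $2d \ge n$ by hypothesis, each factor $d_i + 2d - n$ is non-negative, and so every summand $e_i^2(d_i + 2d - n)$ is non-negative. Summing over $i$ yields the required inequality, and indeed the exact bound $\sum_i d_i^2(n-d_i) \le nd^2(n-d)$ of Proposition~\ref{degree_sum}.

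The main (mild) obstacle is spotting the right regrouping: the combination $(n - 3d)\sum e_i^2 - \sum e_i^3$ is a priori not obviously signed, since $n - 3d < 0$ while $\sum e_i^3$ can have either sign. The rewriting $\sum_i e_i^2(d_i + 2d - n)$ is what lets the hypothesis $d \ge n/2$ do its work in a single line. As a by-product, equality is forced exactly when, for every vertex, either $d_i = d$ or $d_i + 2d = n$; under $d > n/2$ the latter is impossible, so equality is attained precisely by $d$-regular graphs, with the borderline case $d = n/2$ also permitting isolated vertices.
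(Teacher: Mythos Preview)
Your argument is correct. The expansion around the mean gives
\[
\sum_i \phi(d_i) - n\phi(d) = (n-3d)\sum_i e_i^2 - \sum_i e_i^3 = -\sum_i e_i^2\,(d_i + 2d - n),
\]
and with $d_i\ge 0$ and $2d\ge n$ every summand on the right is non-negative, so the inequality follows exactly.

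This is a genuinely different route from the paper. The paper derives Proposition~\ref{degree_sum} as the $s=1$ specialisation of the double-star estimate: it applies Lemma~\ref{opt} to the convex-then-concave function $f(x)=x^{2s}(1-x)$ to locate the maximiser among degree sequences with a prescribed average, and then uses the monotonicity of $\alpha^{2s-1}(1-\alpha)$ on $[\,(2s-1)/2s,\,1\,]$. That argument has the advantage of working uniformly for every $s\ge 1$, but it carries the outlier term from Lemma~\ref{opt} (one coordinate in $(0,\gamma)$), which the paper discards as ``negligible for large $n$''; so as written it delivers the inequality only asymptotically. Your approach is specific to $s=1$ but is entirely elementary, yields the exact bound with no outlier caveat, and makes the equality analysis transparent.

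One small correction to your closing remark: at the borderline $d=n/2$ the condition for equality forces each $d_i\in\{d,\,n-2d\}=\{n/2,\,0\}$, but any isolated vertex would pull the average strictly below $n/2$, so equality still occurs only for regular graphs even there.
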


\section{Alternating $C_4$}\label{sec:AC4}
In this section we present a relatively short proof of Theorem~\ref{thm:altc4} and then discuss the case when the density $\beta \ne 1/k$. 

\bigskip

\noindent
{\bf Proof of Theorem~\ref{thm:altc4}.}
Recall that we are given an $n$-vertex graph $G$ with density $\beta+o(1)$ and we plan to prove that $(\#AC_4, G) \le (1+o(1))\beta^2(1-\beta)n^4$. Our main (simple) inequality is
$$(\#AC_4,G) \le 2\sum_{uv \not\in E} d_ud_v.$$
An $AC_4$ can be obtained by choosing a non-edge $uv$ and neighbors $u'$ of $u$ and $v'$ of $v$. 
If $u'\neq v'$ and $u'v' \not\in E$, then $uu'v'v$ yields a copy of $AC_4$.
Since each $AC_4$ has two non-edges, this counts every copy twice. Moreover, $(\#AC_4, G)$ is four times the number of copies of $AC_4$, giving the inequality.
More precisely, if we write $t$ for the number of $3$-element vertex sets that span exactly two edges and $s$ for the number of $3$-edge paths whose endpoints form a non-edge, then 
$$\sum_{uv \not\in E} d_ud_v = t+s+\frac{1}{2}\cdot (\#AC_4, G).$$
Let $m=|E(G)|=(1+o(1))\beta \binom{n}{2}$. Applying Theorem~\ref{thm:D(1)} 
$$(\#AC_4,G) \le  2\sum_{uv \not\in E} d_ud_v  \le 4m^2-\frac{4m^2}{n} - \frac{8m^3}{n^2} \le 4m^2 - \frac{8m^3}{n^2}
=\left(1+o(1)\right)\beta^2(1-\beta)n^4.$$
The first inequality above is asymptotically  sharp if $s=o(n^4)$, and the second inequality is asymptotically sharp if $G$ is close to regular. Hence, if $G$ is the disjoint union of cliques of size $(\beta+o(1))n$, then $(\#AC_4, G) = (1+o(1))\beta^2(1-\beta)n^4$ as $G$ is close to regular and $s=0$. Such a $G$ exists only if $k=1/\beta$ is an integer, completing the proof of the theorem.\qed

\bigskip

If $1/\beta$ is not an integer, then it is impossible to construct a regular graph with the properties discussed in the proof of Theorem~\ref{thm:altc4}.  Here we suggest that the extremal construction may be obtained by again taking disjoint cliques where all but one of them have the same size, and the remaining clique is smaller, see Question~\ref{conj:altC4}.

We tried to compare flag algebra calculations on $7$ vertices and this construction for many values of $\beta$, and the numbers were close but not equal (see Figure~\ref{fig:C4profile}). For example, even computing on $8$ vertices we obtained a numerical upper bound  $I(AC_4,\frac25)  \leq 0.0859499752$ (improving the bound obtained from a computation on $7$ vertices) while the construction gives $I(AC_4,\frac25) \geq 0.08566600788$.

The construction in this case (three disjoint cliques) is simple enough that one might expect that a flag algebra computation on a moderate number of vertices can ``see'' it if it is in fact uniquely extremal. Maybe there are other constructions we are not aware of, matching or beating our construction. On the other hand, such constructions can certainly not be regular,
since imposing regularity constraints to our flag algebra calculation gives a  numerical upper bound of $0.08409772$.


\begin{figure}
\begin{center}
\begin{tikzpicture}
\begin{axis}[ymin=0.0,ymax=0.1504,xmin=0, xmax=0.5,enlargelimits=false,    xlabel={$\beta$},
    ylabel={$I(AC_4,\beta)$},    
    legend pos=north west
    ]
    \addplot [blue,fill=blue!10!white,fill opacity=1,
    ] coordinates {
( 0.0100000000000000 , 0.0000990000041666668 )
( 0.0200000000000000 , 0.000392000012500000 )
( 0.0300000000000000 , 0.000873000000000000 )
( 0.0400000000000000 , 0.00153600000000000 )
( 0.0500000000000000 , 0.00237500000000000 )
( 0.0600000000000000 , 0.00338400000000000 )
( 0.0700000000000000 , 0.00455700000000000 )
( 0.0800000000000000 , 0.00588800000000000 )
( 0.0900000000000000 , 0.00737100000000000 )
( 0.100000000000000 , 0.00900000000000000 )
( 0.110000000000000 , 0.0107690000000000 )
( 0.120000000000000 , 0.0126720000000000 )
( 0.130000000000000 , 0.0147030000000000 )
( 0.140000000000000 , 0.0168560000000000 )
( 0.150000000000000 , 0.0191250000000000 )
( 0.160000000000000 , 0.0215040000000000 )
( 0.170000000000000 , 0.0239870000000000 )
( 0.180000000000000 , 0.0265680000000000 )
( 0.190000000000000 , 0.0292410000000000 )
( 0.200000000000000 , 0.0320000000000000 )
( 0.210000000000000 , 0.0348390000000000 )
( 0.220000000000000 , 0.0377520000000000 )
( 0.230000000000000 , 0.0407330000000000 )
( 0.240000000000000 , 0.0437760000000000 )
( 0.250000000000000 , 0.0468750000000000 )
( 0.260000000000000 , 0.0496050291666668 )
( 0.270000000000000 , 0.0523984208333332 )
( 0.280000000000000 , 0.0552684000000000 )
( 0.290000000000000 , 0.0582367875000000 )
( 0.300000000000000 , 0.0613967791666668 )
( 0.310000000000000 , 0.0648150833333332 )
( 0.320000000000000 , 0.0685419500000000 )
( 0.330000000000000 , 0.0726243583333332 )
( 0.340000000000000 , 0.0744246916666668 )
( 0.350000000000000 , 0.0755490458333332 )
( 0.360000000000000 , 0.0771899333333332 )
( 0.370000000000000 , 0.0792449333333332 )
( 0.380000000000000 , 0.0816247000000000 )
( 0.390000000000000 , 0.0842997000000000 )
( 0.400000000000000 , 0.0874120000000000 )
( 0.410000000000000 , 0.0901742125000000 )
( 0.420000000000000 , 0.0931719708333332 )
( 0.430000000000000 , 0.0964483458333332 )
( 0.440000000000000 , 0.0998207125000000 )
( 0.450000000000000 , 0.103408066666667 )
( 0.460000000000000 , 0.107221520833333 )
( 0.470000000000000 , 0.111273270833333 )
( 0.480000000000000 , 0.115576841666667 )
( 0.490000000000000 , 0.120147054166667 )
( 0.500000000000000 , 0.125000000000000 )
( 0.510000000000000 , 0.120147054166667 )
( 0.520000000000000 , 0.115576841666667 )
( 0.530000000000000 , 0.111273270833333 )
( 0.540000000000000 , 0.107221520833333 )
( 0.550000000000000 , 0.103408066666667 )
( 0.560000000000000 , 0.0998207125000000 )
( 0.570000000000000 , 0.0964483458333332 )
( 0.580000000000000 , 0.0931719708333332 )
( 0.590000000000000 , 0.0901742125000000 )
( 0.600000000000000 , 0.0874120000000000 )
( 0.610000000000000 , 0.0842997000000000 )
( 0.620000000000000 , 0.0816247000000000 )
( 0.630000000000000 , 0.0792449333333332 )
( 0.640000000000000 , 0.0771899333333332 )
( 0.650000000000000 , 0.0755490458333332 )
( 0.660000000000000 , 0.0744246916666668 )
( 0.670000000000000 , 0.0726243583333332 )
( 0.680000000000000 , 0.0685419458333332 )
( 0.690000000000000 , 0.0648150833333332 )
( 0.700000000000000 , 0.0613967791666668 )
( 0.710000000000000 , 0.0582367875000000 )
( 0.720000000000000 , 0.0552684000000000 )
( 0.730000000000000 , 0.0523984208333332 )
( 0.740000000000000 , 0.0496050291666668 )
( 0.750000000000000 , 0.0468750000000000 )
( 0.760000000000000 , 0.0437760000000000 )
( 0.770000000000000 , 0.0407330000000000 )
( 0.780000000000000 , 0.0377520000000000 )
( 0.790000000000000 , 0.0348390000000000 )
( 0.800000000000000 , 0.0320000000000000 )
( 0.810000000000000 , 0.0292410000000000 )
( 0.820000000000000 , 0.0265680000000000 )
( 0.830000000000000 , 0.0239870000000000 )
( 0.840000000000000 , 0.0215040000000000 )
( 0.850000000000000 , 0.0191250000000000 )
( 0.860000000000000 , 0.0168560000000000 )
( 0.870000000000000 , 0.0147030000000000 )
( 0.880000000000000 , 0.0126720000000000 )
( 0.890000000000000 , 0.0107690000000000 )
( 0.900000000000000 , 0.00900000000000000 )
( 0.910000000000000 , 0.00737100000000000 )
( 0.920000000000000 , 0.00588800000000000 )
( 0.930000000000000 , 0.00455700000000000 )
( 0.940000000000000 , 0.00338400000000000 )
( 0.950000000000000 , 0.00237500000000000 )
( 0.960000000000000 , 0.00153600000000000 )
( 0.970000000000000 , 0.000873000000000000 )
( 0.980000000000000 , 0.000392000020833333 )
( 0.990000000000000 , 0.0000990000125000000 )
( 1.00000000000000 , 0 )
};

    \addplot [only marks,
        mark=*,color=red,
    ] coordinates {
( 0.500000000000000 , 0.125000000000000 )
( 0.333333333333333 , 0.0740740740740740 )
( 0.250000000000000 , 0.0468750000000000 )
( 0.200000000000000 , 0.0320000000000000 )
( 0.166666666666667 , 0.0231481481481482 )
( 0.142857142857143 , 0.0174927113702624 )
( 0.125000000000000 , 0.0136718750000000 )
( 0.111111111111111 , 0.0109739368998628 )
( 0.100000000000000 , 0.00900000000000000 )
( 0.0909090909090909 , 0.00751314800901576 )
( 0.0833333333333333 , 0.00636574074074076 )
( 0.0769230769230769 , 0.00546199362767412 )
( 0.0714285714285714 , 0.00473760932944608 )
( 0.0666666666666667 , 0.00414814814814816 )
( 0.0625000000000000 , 0.00366210937500000 )
( 0.0588235294117647 , 0.00325666598819458 )
( 0.0555555555555556 , 0.00291495198902606 )
( 0.0526315789473684 , 0.00262428925499344 )
( 0.0500000000000000 , 0.00237500000000000 )
( 0.0476190476190476 , 0.00215959399632869 )
( 0.0454545454545455 , 0.00197220135236664 )
( 0.0434782608695652 , 0.00180816963918797 )
( 0.0416666666666667 , 0.00166377314814815 )
( 0.0400000000000000 , 0.00153600000000000 )
( 0.0384615384615385 , 0.00142239417387346 )
( 0.0370370370370370 , 0.00132093684905756 )
( 0.0357142857142857 , 0.00122995626822158 )
( 0.0344827586206897 , 0.00114805855098610 )
( 0.0333333333333333 , 0.00107407407407408 )
( 0.0322580645161290 , 0.00100701554160652 )
( 0.0312500000000000 , 0.000946044921875000 )
( 0.0303030303030303 , 0.000890447171438908 )
( 0.0294117647058824 , 0.000839609200081416 )
( 0.0285714285714286 , 0.000793002915451896 )
( 0.0277777777777778 , 0.000750171467764060 )
( 0.0270270270270270 , 0.000710718022624524 )
( 0.0263157894736842 , 0.000674296544685816 )
( 0.0256410256410256 , 0.000640604190900048 )
( 0.0250000000000000 , 0.000609375000000000 )
( 0.0243902439024390 , 0.000580374631824844 )
( 0.0238095238095238 , 0.000553395961559228 )
( 0.0232558139534884 , 0.000528255373740676 )
( 0.0227272727272727 , 0.000504789631855748 )
( 0.0222222222222222 , 0.000482853223593964 )
( 0.0217391304347826 , 0.000462316100928740 )
( 0.0212765957446809 , 0.000443061749323368 )
( 0.0208333333333333 , 0.000424985532407408 )
( 0.0204081632653061 , 0.000407993268111076 )
( 0.0200000000000000 , 0.000392000000000000 )
( 0.0196078431372549 , 0.000376928933818818 )
( 0.0192307692307692 , 0.000362710514337733 )
( 0.0188679245283019 , 0.000349281621741438 )
( 0.0185185185185185 , 0.000336584870192552 )
( 0.0181818181818182 , 0.000324567993989482 )
( 0.0178571428571429 , 0.000313183309037901 )
( 0.0175438596491228 , 0.000302387239258503 )
( 0.0172413793103448 , 0.000292139899134856 )
( 0.0169491525423729 , 0.000282404724923191 )
( 0.0166666666666667 , 0.000273148148148148 )
( 0.0163934426229508 , 0.000264339305933096 )
( 0.0161290322580645 , 0.000255949783491658 )
( 0.0158730158730159 , 0.000247953384763664 )
( 0.0156250000000000 , 0.000240325927734375 )
( 0.0153846153846154 , 0.000233045061447428 )
( 0.0151515151515152 , 0.000226090102123160 )
( 0.0149253731343284 , 0.000219441886136260 )
( 0.0147058823529412 , 0.000213082637899450 )
( 0.0144927536231884 , 0.000206995850950811 )
( 0.0142857142857143 , 0.000201166180758018 )
( 0.0140845070422535 , 0.000195579347938454 )
( 0.0138888888888889 , 0.000190222050754458 )
( 0.0136986301369863 , 0.000185081885881594 )
( 0.0135135135135135 , 0.000180147276568022 )
( 0.0133333333333333 , 0.000175407407407408 )
( 0.0131578947368421 , 0.000170852165038635 )
( 0.0129870129870130 , 0.000166472084164781 )
( 0.0128205128205128 , 0.000162258298352973 )
( 0.0126582278481013 , 0.000158202495137302 )
( 0.0125000000000000 , 0.000154296875000000 )
( 0.0123456790123457 , 0.000150534113852714 )
( 0.0121951219512195 , 0.000146907328680663 )
( 0.0120481927710843 , 0.000143410046048616 )
( 0.0119047619047619 , 0.000140036173199438 )
( 0.0117647058823529 , 0.000136779971504173 )
( 0.0116279069767442 , 0.000133636032047493 )
( 0.0114942528735632 , 0.000130599253154504 )
( 0.0113636363636364 , 0.000127664819684448 )
( 0.0112359550561798 , 0.000124828183934329 )
( 0.0111111111111111 , 0.000122085048010974 )
( 0.0109890109890110 , 0.000119431347543894 )
( 0.0108695652173913 , 0.000116863236623654 )
( 0.0107526881720430 , 0.000114377073861482 )
( 0.0106382978723404 , 0.000111969409475742 )
( 0.0105263157894737 , 0.000109636973319726 )
( 0.0104166666666667 , 0.000107376663773148 )
( 0.0103092783505155 , 0.000105185537426877 )
( 0.0102040816326531 , 0.000103060799496808 )
( 0.0101010101010101 , 0.000100999794908580 )
( 0 , 0 )
};

    \addplot [red,fill=red!10!white,fill opacity=1,
    ] coordinates {
( 0.500000000000000 , 0.000000000000000 )
( 0.500000000000000 , 0.125000000000000 )
( 0.459357277882798 , 0.106367544427014 )
( 0.427083333333333 , 0.0940936053240740 )
( 0.401600000000000 , 0.0861132800000000 )
( 0.381656804733728 , 0.0810239487412904 )
( 0.366255144032922 , 0.0778675337431624 )
( 0.354591836734694 , 0.0759872188671388 )
( 0.346016646848989 , 0.0749320284300016 )
( 0.340000000000000 , 0.0743925925925924 )
( 0.336108220603538 , 0.0741574907338328 )
( 0.333333333333333 , 0.0740740740740740 )
( 0.314878892733564 , 0.0662797380299564 )
( 0.299591836734694 , 0.0604748021657644 )
( 0.287037037037037 , 0.0561914151806128 )
( 0.276844411979547 , 0.0530701471218320 )
( 0.268698060941828 , 0.0508340942749060 )
( 0.262327416173570 , 0.0492694648361468 )
( 0.257500000000000 , 0.0482109375000000 )
( 0.254015466983938 , 0.0475305590246308 )
( 0.251700680272109 , 0.0471292825520232 )
( 0.250000000000000 , 0.0468750000000000 )
( 0.239506172839506 , 0.0430812376162172 )
( 0.230623818525520 , 0.0401040233561200 )
( 0.223177908555908 , 0.0377901752184210 )
( 0.217013888888889 , 0.0360145097897376 )
( 0.211995002082466 , 0.0346745707267259 )
( 0.208000000000000 , 0.0336864000000000 )
( 0.204921184159938 , 0.0329811338938784 )
( 0.202662721893491 , 0.0325022539476909 )
( 0.201139195443218 , 0.0322033599726050 )
( 0.200000000000000 , 0.0320000000000000 )
( 0.193239795918367 , 0.0298978208949396 )
( 0.187442289935365 , 0.0281981784579217 )
( 0.182520808561237 , 0.0268378126939646 )
( 0.178397012352772 , 0.0257630353671893 )
( 0.175000000000000 , 0.0249282407407408 )
( 0.172265520021500 , 0.0242946600354576 )
( 0.170135275754422 , 0.0238293173625722 )
( 0.168556311413454 , 0.0235041526478736 )
( 0.167480468750000 , 0.0232952833175659 )
( 0.166666666666667 , 0.0231481481481482 )
( 0.161951436845623 , 0.0218688578168927 )
( 0.157871972318339 , 0.0208142824858419 )
( 0.154379332073094 , 0.0199539235706202 )
( 0.151428571428571 , 0.0192612244897959 )
( 0.148978377306090 , 0.0187130477515439 )
( 0.146990740740741 , 0.0182892250371514 )
( 0.145430662413211 , 0.0179721694734544 )
( 0.144265887509131 , 0.0177465409855397 )
( 0.143466666666667 , 0.0175989570370370 )
( 0.142857142857143 , 0.0174927113702624 )
( 0.139381985535832 , 0.0166585186309052 )
( 0.136356353148534 , 0.0159614045475284 )
( 0.133750000000000 , 0.0153849609375000 )
( 0.131534827008078 , 0.0149146319414201 )
( 0.129684711481261 , 0.0145374998097858 )
( 0.128175352010451 , 0.0142420967652859 )
( 0.126984126984127 , 0.0140182395709606 )
( 0.126089965397924 , 0.0138568838974629 )
( 0.125473228772309 , 0.0137499959781222 )
( 0.125000000000000 , 0.0136718750000000 )
( 0.122333038757733 , 0.0130985439299180 )
( 0.120000000000000 , 0.0126145404663923 )
( 0.117980920178722 , 0.0122102876587856 )
( 0.116257088846881 , 0.0118771686421932 )
( 0.114810960804717 , 0.0116074282054265 )
( 0.113626075147125 , 0.0113940849821945 )
( 0.112686980609418 , 0.0112308530474751 )
( 0.111979166666667 , 0.0111120718496817 )
( 0.111488999893719 , 0.0110326435385824 )
( 0.111111111111111 , 0.0109739368998628 )
( 0.109000000000000 , 0.0105633000000000 )
( 0.107146358200176 , 0.0102139141285838 )
( 0.105536332179931 , 0.00991982721577672 )
( 0.104156847959280 , 0.00967562395180184 )
( 0.102995562130178 , 0.00947637635783936 )
( 0.102040816326531 , 0.00931759914850292 )
( 0.101281594873621 , 0.00919520939217772 )
( 0.100707485369901 , 0.00910549002968052 )
( 0.100308641975309 , 0.00904505685955732 )
( 0.100000000000000 , 0.00900000000000000 )
( 0.0982874766658551 , 0.00869597346278556 )
( 0.0967793367346939 , 0.00843567631732220 )
( 0.0954655807032657 , 0.00821522036164696 )
( 0.0943367189904586 , 0.00803103609027700 )
( 0.0933837429111531 , 0.00787984605543864 )
( 0.0925980975029727 , 0.00775864057213468 )
( 0.0919716560742202 , 0.00766465554775840 )
( 0.0914966963516231 , 0.00759535223882492 )
( 0.0911658781159523 , 0.00754839875693648 )
( 0.0909090909090909 , 0.00751314800901576 )
( 0.0894920720236496 , 0.00728184369587952 )
( 0.0882411263137022 , 0.00708279868548552 )
( 0.0871488033298647 , 0.00691336810424452 )
( 0.0862080000000000 , 0.00677110579200000 )
( 0.0854119425547997 , 0.00665374909517012 )
( 0.0847541695083390 , 0.00655920488479332 )
( 0.0842285156250000 , 0.00648553669452668 )
( 0.0838290968090860 , 0.00643095288323248 )
( 0.0835502958579882 , 0.00639379573544344 )
( 0.0833333333333333 , 0.00636574074074076 )
( 0.0821414438351518 , 0.00618571920772276 )
( 0.0810871018044108 , 0.00603014839720332 )
( 0.0801646090534979 , 0.00589716788881552 )
( 0.0793685121107266 , 0.00578504629449480 )
( 0.0786935904949651 , 0.00569217215279500 )
( 0.0781348456206679 , 0.00561704549951448 )
( 0.0776874902955334 , 0.00555827006118636 )
( 0.0773469387755102 , 0.00551454602249064 )
( 0.0771087973441980 , 0.00548466332274740 )
( 0.0769230769230769 , 0.00546199362767412 )
( 0.0759066358024691 , 0.00531916130681060 )
( 0.0750059453032105 , 0.00519528730448012 )
( 0.0742165509476450 , 0.00508902558436628 )
( 0.0735341755749919 , 0.00499911680910528 )
( 0.0729547114682250 , 0.00492438266376260 )
( 0.0724742128732940 , 0.00486372057045480 )
( 0.0720888888888889 , 0.00481609876543208 )
( 0.0717950967062848 , 0.00478055171218332 )
( 0.0715893351800554 , 0.00475617582618112 )
( 0.0714285714285714 , 0.00473760932944608 )
( 0.0705515088449532 , 0.00462239624220780 )
( 0.0697731755424063 , 0.00452216997861628 )
( 0.0690900239360623 , 0.00443593297986820 )
( 0.0684986380387758 , 0.00436274766668648 )
( 0.0679957280170879 , 0.00430173277658748 )
( 0.0675781250000000 , 0.00425205993652344 )
( 0.0672427761274642 , 0.00421295045479960 )
( 0.0669867398262460 , 0.00418367231734096 )
( 0.0668071813015168 , 0.00416353737446796 )
( 0.000000000000000 , 0.000000000000000 )
};

\addlegendentry{Flag Algebras}
\addlegendentry{Theorem~\ref{thm:altc4}}
\addlegendentry{Construction}
\end{axis}
\end{tikzpicture}\\
\end{center}
\caption{The upper was bound obtained by flag algebras calculation on 7 vertices and the lower bound obtained by a construction.}\label{fig:C4profile}
\end{figure}
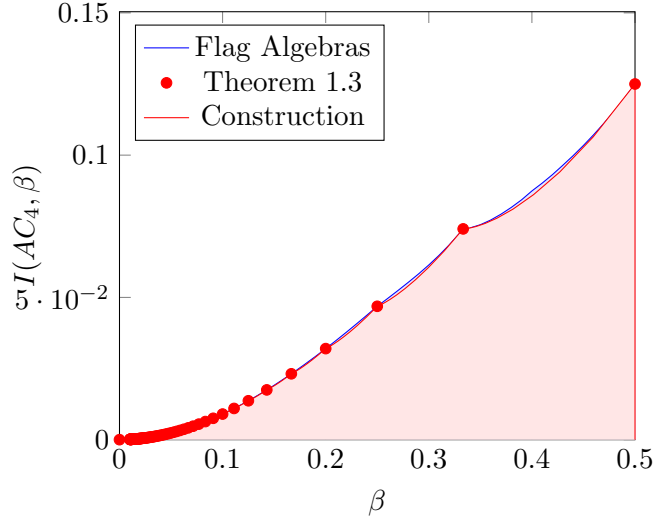

\section{Path $P_{EENN}$.}\label{sec:PEENN}

\begin{figure}[ht]
\begin{center}
\includegraphics[width=7cm]{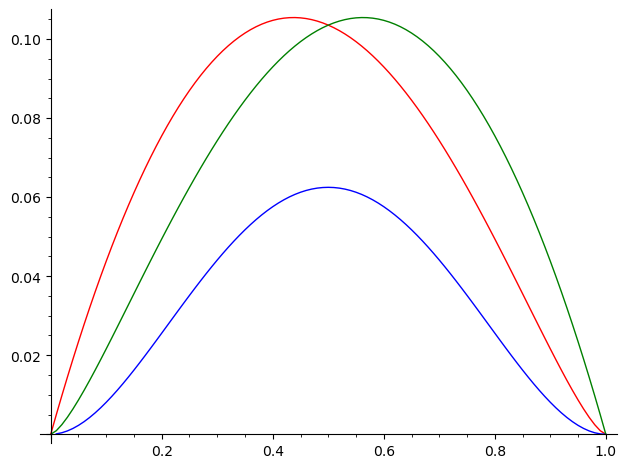}
\end{center}
    \caption{Red and green are bounds from clique and isolated vertices and the complement. Their maximum is $I(P_{EENN},\beta)$. 
    Blue is a regular graph, perhaps it is the minimum.}
    \label{fig:PEENN}
\end{figure}

Theorem~\ref{thm:IPEEN} is a corollary of the following two theorems that deals with  $\beta \in [1/2,1]$. 
See Figure~\ref{fig:PEENN} for an illustration of the resulting bound.
The case $\beta \in [0,1/2]$ follows from symmetry. 
Recall that $K(a,n)$ is the $n$-vertex graph comprising a clique on $a$ vertices and $n-a$ vertices of degree 0.

\begin{theorem}\label{thm:PEENNI}
    $I(P_{EENN},\beta) = \beta^{3/2}-\beta^2$ for $\beta\in [1/2,1]$. 
\end{theorem}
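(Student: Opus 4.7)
My plan is to count $P_{EENN}$-copies by the pivot vertex $v_3$ (the one joining the edge-portion to the non-edge-portion of the path). For each vertex $v$, define
\[
A(v) = \sum_{u\in N(v)} d_u \quad\text{and}\quad B(v) = \sum_{w \in V\setminus (N(v)\cup\{v\})} (n-1-d_w),
\]
so that $A(v)$ counts ordered edge-cherries ending at $v$ and $B(v)$ counts ordered non-edge-cherries starting at $v$ in the complement. Since enforcing all five path-vertices to be distinct loses only $O(n^4)$, we have $(\#P_{EENN},G) = \sum_v A(v) B(v) + O(n^4)$. A direct computation on $K(a,n)$ with $a=(1+o(1))\sqrt{\beta}\,n$ confirms this sum equals $(\beta^{3/2}-\beta^2)n^5 + o(n^5)$, so the task reduces to proving the matching upper bound for every graph of density $\beta\in[1/2,1]$.

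Expanding the double sum via the identity $|N(u)\setminus (N(w)\cup\{w\})| = d_u - |N(u)\cap N(w)| - \mathbf{1}[uw \in E]$ I would derive the decomposition
\[
\sum_v A(v) B(v) = (n^2-2m)\sum_u d_u^2 \;-\; 2n\sum_{uv\in E} d_u d_v \;+\; \sum_v A(v)^2 \;+\; O(n^4).
\]
Cauchy--Schwarz applied to $A(v)^2 \leq d_v \sum_{u\in N(v)} d_u^2$ telescopes to $\sum_v A(v)^2 \leq (\max_v d_v)^2 \sum_u d_u^2$, an inequality one checks is tight both at the regular graph and at $K(a,n)$. The algebraic identity
\[
n\sum_u d_u^2 - 2\sum_{uv\in E} d_u d_v = \sum_u d_u^2(n-d_u) + \sum_{uv\in E}(d_u-d_v)^2
\]
then rewrites the leading combination as a sum of non-negative pieces.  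Proposition~\ref{degree_sum} (valid because $\beta\ge1/2$) bounds the first piece by $\beta^2(1-\beta)n^4 + o(n^4)$, and the second piece is controlled via Nikiforov's lower bound $\sum_{uv\in E} d_u d_v \ge 4m^3/n^2$ from Lemma~\ref{lem:nikiforov} together with an upper bound $\sum_u d_u^2 \le \beta^{3/2}n^3 + o(n^3)$ of Kruskal--Katona type (also valid in the regime $\beta\ge1/2$).

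The main obstacle I foresee is matching the sharp constant: each of these inequalities is tight only in a specific regime (Proposition~\ref{degree_sum} and Nikiforov at regular graphs, Kruskal--Katona at cliques, Cauchy--Schwarz on $A(v)^2$ at both), so a naive combination loses slack. Closing this gap will likely require either a stability argument forcing any approximate extremizer to be structurally close to $K(a,n)$, or a flag-algebra calculation on $5$-vertex flags parametrized by $\beta\ge1/2$, analogous to the flag-algebra proof sketched for $AP_4$ in Section~\ref{sec:AP4}. Given the delicacy of simultaneously saturating the three algebraic inequalities, I expect a flag-algebra implementation to yield the cleanest, even if purely computational, route to the sharp bound.
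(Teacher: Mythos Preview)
Your decomposition
\[
\sum_v A(v)B(v)=(n^2-2m)\sum_u d_u^2-2n\sum_{uv\in E}d_ud_v+\sum_v A(v)^2+O(n^4)
\]
and the identity $n\sum_u d_u^2-2\sum_{uv\in E}d_ud_v=\sum_u d_u^2(n-d_u)+\sum_{uv\in E}(d_u-d_v)^2$ are both correct, and you are right that the obstacle is the simultaneous saturation of the inequalities. But the gap is not just a matter of lost constants: the route via Proposition~\ref{degree_sum} is doomed from the start. That proposition gives $n\sum_u d_u^2(n-d_u)\le\beta^2(1-\beta)\,n^5$, and for every $\beta\in(1/2,1)$ one has $\beta^2(1-\beta)>\beta^{3/2}-\beta^2$ (their ratio is $\sqrt\beta(1+\sqrt\beta)>1$). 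Since the companion piece $n\sum_{uv\in E}(d_u-d_v)^2$ is nonnegative, the first two pieces alone already exceed the target; no bound on the remaining piece $-2m\sum_u d_u^2+\sum_v A(v)^2$ can repair this, because at the extremizer $K(\sqrt\beta\,n,n)$ that remaining piece equals $0$, not something negative. Separately, your Cauchy--Schwarz bound $\sum_v A(v)^2\le(\max_v d_v)^2\sum_u d_u^2$ is unusable: $\max_v d_v$ is not controlled by the edge density (a single universal vertex makes it $n-1$), so this inequality gives nothing uniform in $\beta$. A stability argument cannot close the gap either, since the slack in Proposition~\ref{degree_sum} is of main order, not $o(n^5)$.

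The paper's proof is exactly your fallback suggestion: a flag-algebra certificate on $5$-vertex flags, parametrized by $a=\sqrt\beta$. It combines two explicit squares of $4$-vertex flags on a $3$-labelled type, with nonnegative multipliers $60(a-a^2)$ and $30C$ (where $C\in\{\sqrt2-1,0\}$ depending on whether $\beta\le 0.64$ or $\beta>0.64$), together with a Lagrange multiplier enforcing the density constraint, expands everything into the $34$ unlabelled $5$-vertex graphs, and checks that each resulting coefficient is a nonpositive polynomial in $a$ on the relevant interval. There is no combinatorial interpretation beyond this certificate; in particular, nothing along the degree-sequence lines you outline appears.
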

\begin{proof}
To simplify the expressions in the proof we let $a = \sqrt{\beta}$. 
Our goal is to show $I(P_{EENN},a^2) = a^3-a^4$ for $a \in [1/\sqrt{2},1]$.
The lower bound is implied by $K(a,n)$.
For a convergent sequence $(G_n)$ with the corresponding homomorphism $\phi$, the scaling for flag algebras  is
\[
\rho(P_{EENN}, G_n) = \frac{1}{120}\phi(P_{EENN}).
\]
In our flag algebra calculations, we omit writing $\phi$ since they are valid for all $\phi$.
We can rewrite $P_{EENN}$ as a linear combination of $5$-vertex subgraph densities as follows.

$
P_{EENN} = 
\Fe50{1 1 1 4 3 1 1 3 1 4}
=
 4  \cdot \Fe50{1 1 1 2 1 1 1 1 1 2}\,
+12 \cdot \Fe50{1 1 1 2 1 1 1 1 2 2}\,
+24 \cdot \Fe50{1 1 1 2 1 1 2 1 2 2}\,
+6  \cdot \Fe50{1 1 1 2 1 2 1 1 1 2}\,
+8  \cdot \Fe50{1 1 1 2 1 2 1 1 2 2}\,
+16 \cdot \Fe50{1 1 1 2 1 2 2 1 1 2}\,
+20 \cdot \Fe50{1 1 1 2 1 2 2 1 2 2}\,
+12 \cdot \Fe50{1 1 2 2 1 1 1 1 1 2}\,
+12 \cdot \Fe50{1 1 2 2 1 2 1 1 2 2}\,
+20 \cdot \Fe50{1 1 2 2 1 2 2 1 1 2}\,
+16 \cdot \Fe50{1 1 2 2 1 2 2 1 2 2}\,
+2  \cdot \Fe50{1 2 1 2 1 2 1 1 1 2}\,
+4  \cdot \Fe50{1 2 1 2 1 2 1 1 2 2}\,
+8  \cdot \Fe50{1 2 1 2 1 2 2 1 2 2}\,
+8  \cdot \Fe50{1 2 2 2 1 2 1 1 2 2}\,
+6  \cdot \Fe50{1 2 2 2 1 2 2 1 2 2}\,
+8  \cdot \Fe50{2 1 1 2 1 2 1 1 1 2}\,
+4  \cdot \Fe50{2 1 1 2 1 2 1 1 2 2}\,
+24 \cdot \Fe50{2 1 2 2 1 2 2 1 1 2}\,
+12 \cdot \Fe50{2 1 2 2 1 2 2 1 2 2}\,
+2  \cdot \Fe50{2 2 1 2 1 2 1 1 2 2}\,
+12 \cdot \Fe50{2 2 2 2 1 1 2 1 2 2}\,
+4  \cdot \Fe50{2 2 2 2 1 2 2 1 2 2}\,
$

In the flag algebra language, our main goal is to prove that
\begin{align}\label{eq:Peenn}
\frac{1}{120}\Fe50{1 1 1 4 3 1 1 3 1 4}  \leq a^3-a^4.
\end{align}
Since $\beta = a^2$ is the density of edges,  we can use $\Fe202 - a^2 = 0$.
The inequality \eqref{eq:Peenn} follows from
\begin{align}    
  0 &\geq a^2(1-a^2)\cdot  \Bigg(    \Fe50{1 1 1 4 3 1 1 3 1 4}  - 120(a^3-a^4) \Bigg) 
  +
  \Bigg( 120a^2(a^3-a^4) \cdot \Fe30{2 2 2} - 120(1-a^2)(a^3-a^4) \cdot \Fe30{1 1 1}  \nonumber  \\
  &~ +(-12a^6+120a^5+80a^4-80a^3+20a^2-20a) \cdot \Fe30{1 1 2} + 15\cdot B \cdot \Fe30{1 2 2} \Bigg)  \times  \Bigg(  \Fe20{2} - a^2  \Bigg) \label{eq:FAPEENN} \\
  &~ 
  + 60(a-a^2)\cdot  \Bigg\llbracket  \Bigg(  a \Fe43{1 1 1 2 1 1} - (1-a) \Fe43{1 1 1 2 2 2} \Bigg)  ^{2} \Bigg\rrbracket + 30\cdot C \cdot \Bigg\llbracket  \Bigg(  a \Fe43{1 2 2 2 2 2} - (1-a) \Fe43{1 2 1 2 1 2} \Bigg)  ^{2} \Bigg\rrbracket  ,
    \nonumber 
\end{align}
where 
$B = C = \sqrt{2}-1$ if $\beta \in [1/2,0.8^2]$
and $B=0.361, C=0$ if $\beta \in (0.8^2,1]$.
We remark that there is some freedom in choosing $B$ and $C$ when $\beta > 1/2$, but they are fixed for $\beta=1/2$.
A crucial point for the validity of the inequality~\eqref{eq:FAPEENN} is that both $(a-a^2)$ and $C$ i.e., the multiplicative factors for sum of squares, are non-negative. 

The evaluation of \eqref{eq:FAPEENN} is similar to the proof of Theorem~\ref{thm:AP4flags}. However, it is performed on 5-vertex graphs. Let $\mathcal{F}_5$ be the set of all $5$-vertex $2$-edge colored graphs up to isomorphism. A tedious yet straightforward enumeration reveals that $|\mathcal{F}_5|=34$.
Also, the corresponding coefficients are more involved.
On the other hand, they need to be tight only for subgraphs of the extremal constructions.
More precisely, the right hand side of \eqref{eq:FAPEENN} can be expressed as a linear combination of the elements of $\mathcal{F}_5$
\begin{align}\label{eq:PEENN0}
\sum_{F \in \mathcal{F}_5} c_{a,F} \cdot F \leq \max_{F \in \mathcal{F}_5} c_{a,F} = 0.
\end{align}
The interested reader may find all the coefficients $ c_{a,F}$ in the in the Appendix of the arXiv preprint of this paper.
\end{proof}

The proof of Theorem~\ref{thm:PEENNI} also implies the (asymptotic) stability of the extremal constructions.

\begin{theorem} 
Let $\beta \in [1/2,1)$ be fixed.
If $G$ is a sufficiently large graph with edge density $\beta$ maximizing the number of copies of $P_{EENN}$, then $G$ is in edit distance at most $\varepsilon n^2$ from a graph $K(\sqrt{\beta}n,n)$ or its complement if $\beta = 1/2$. 
\end{theorem}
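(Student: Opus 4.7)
The plan is to derive stability from the flag algebra certificate \eqref{eq:FAPEENN} used in the proof of Theorem~\ref{thm:PEENNI}. By standard compactness, it suffices to prove the graphon version: every graphon $W$ with edge density $\beta$ that achieves $I(P_{EENN},\beta)$ is, up to measure-preserving rearrangement, the graphon of $K(\sqrt{\beta}\,n,n)$, or its complement when $\beta=1/2$. Transferring back to finite graphs in edit distance at most $\varepsilon n^2$ is then routine via the counting lemma, cf.~\cite{bodnar2025,chen2025}.

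Set $a=\sqrt{\beta}$ and let $W$ be such an extremal graphon. The identity \eqref{eq:FAPEENN}, when expanded as in \eqref{eq:PEENN0}, reads
\[
a^2(1-a^2)\bigl(\phi(P_{EENN})-120(a^3-a^4)\bigr) + (\text{density-fixing term}) + \lambda_1\,\phi(\llbracket Q_1^2\rrbracket) + \lambda_2\,\phi(\llbracket Q_2^2\rrbracket) \;=\; \sum_{F\in\mathcal F_5} c_{a,F}\,\phi(F),
\]
with $\lambda_1=60(a-a^2)>0$, $\lambda_2=30C\ge 0$, and $c_{a,F}\le 0$ for every $F$. Since $\phi(\text{edge})=a^2$ the density-fixing term vanishes, and the hypothesis $\phi(P_{EENN})=120(a^3-a^4)$ forces both \textbf{(i)} $\phi(F)=0$ whenever $c_{a,F}<0$, and \textbf{(ii)} $\phi(\llbracket Q_i^2\rrbracket)=0$ for each $i$ with $\lambda_i>0$.

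The next step is to enumerate the tight $5$-vertex flags (those $F$ with $c_{a,F}=0$) and to verify that for $a\in[1/\sqrt 2,1)$ these are exactly the $5$-vertex induced substructures of $K(\sqrt{\beta}\,n,n)$, with the complementary graph joining the list precisely when $a=1/\sqrt 2$. Condition (ii) then translates into pointwise identities: for almost every labeled vertex pair of the appropriate type, two conditional rooted $4$-vertex flag densities are in ratio $a:(1-a)$. Combining the excluded substructures from (i) with these pointwise relations, the plan is to argue $W\in\{0,1\}$ almost everywhere and that its support is a product set $A\times A$. The edge-density constraint $\int W=\beta$ then forces $|A|=\sqrt{\beta}$, identifying $W$ uniquely (with a second choice by complementation exactly when $\beta=1/2$).

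The main obstacle is the last step: translating the local, flag-level conditions (i) and (ii) into a global block structure for $W$. The auxiliary constants $B$ and $C$ in \eqref{eq:FAPEENN} change value at $\beta=0.64$, so the list of tight $F$'s may shift across this threshold and the argument likely splits into the sub-ranges $[1/2,0.64]$ and $(0.64,1)$. Moreover, on $(0.64,1)$ the second SOS is inactive ($C=0$), so one must rely more heavily on the excluded substructures from (i) to rule out intermediate-density regions of $W$; this is the delicate part of the analysis.
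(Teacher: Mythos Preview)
Your outline shares the paper's opening move---reading off the tight $5$-vertex flags from the certificate \eqref{eq:FAPEENN}---but the two approaches then diverge in execution.

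The paper carries out the tight-flag enumeration explicitly and finds that for every $a\in[1/\sqrt2,1)$ the flags $F$ with $c_{a,F}=0$ form a \emph{constant} set $\mathcal A$ consisting of exactly the five induced $5$-vertex subgraphs of $K(\sqrt\beta\,n,n)$; at $a=1/\sqrt2$ four further flags become tight, three of which are the complementary configurations, while the fourth, $C_5$, is an artefact of this particular certificate and is eliminated by a separate flag-algebra calculation tailored to $\beta=1/2$. In particular your worry about the threshold $\beta=0.64$ does not materialise: although the constants $B,C$ jump there, the zero set of the coefficients $c_{a,F}$ does not. The paper then lists five specific $4$-vertex graphs $\mathcal F$ that occur as induced subgraphs of no member of $\mathcal A\cup\mathcal A_{1/2}\setminus\{C_5\}$, so these have density $o(1)$ in any extremal sequence. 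From here the paper \emph{does not} work in graphon space and never uses the SOS identities (your condition (ii)): it applies the induced removal lemma to pass to a graph $G'$ with no induced copy of any member of $\mathcal F$, and a short direct combinatorial argument (maximum clique $K$, isolated set $I$, remainder $R$; three claims force $R$ independent and either $R=\emptyset$ or $I=\emptyset$) determines $G'$ exactly.

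Your graphon route is reasonable in principle, but the ``main obstacle'' you flag is essentially the entire substantive proof, and you give no indication how to carry it out. Leaning on the pointwise SOS constraints turns out to be unnecessary---and on $(0.64,1)$, where the second SOS is absent, you correctly anticipate that this crutch disappears---whereas the paper shows that forbidden induced substructures alone suffice, which makes the structural analysis both cleaner and uniform across the whole range. The one genuine subtlety you do not mention, the spurious tightness of $C_5$ at $\beta=1/2$, requires an auxiliary certificate; any completion of your argument would need the same patch.
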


\begin{proof}
If $(G_n)$ is a convergent sequence of graphs with edge densities $\beta$ achieving $I(P_{EENN},\beta)$,
all the 5-vertex subgraphs with non-zero density must have $c_{a,F} = 0$ in \eqref{eq:PEENN0}.
The inspection of  $c_{a,F}$ in \eqref{eq:PEENN0}
shows that $c_{a,F} = 0$ for all $a \in [\sqrt{\beta},1)$ only for $F$ in 
\[
\mathcal{A} := \left\{
\Fe50{1 1 1 1 1 1 1 1 1 1}, \quad
\Fe50{1 1 1 1 1 1 1 1 1 2}, \quad 
\Fe50{1 1 1 1 1 1 1 2 2 2}, \quad
\Fe50{1 1 1 1 2 2 2 2 2 2}, \quad
\Fe50{2 2 2 2 2 2 2 2 2 2}
\right\}.
\]
Additional graphs $F$ with at least one $a \in [\sqrt{\beta},1)$ such $c_{a,F} = 0$ appear only at $a=1/\sqrt{2}$ and they are the following four
\[
\mathcal{A}_{1/2} := \left\{
\Fe50{2 2 2 2 2 2 2 2 2 1}, \quad 
\Fe50{2 2 2 2 2 2 2 1 1 1}, \quad
\Fe50{2 2 2 2 1 1 1 1 1 1}, \quad
\Fe50{2 1 1 2 2 1 1 2 1 2} 
\right\}.
\]
However, $c_{1/\sqrt{2},C_5} = 0$ is an artifact of the proof.
A separate flag algebra calculation taylored for $a = 1/\sqrt{2}$ has  $c_{a,C_5} < 0$.
We conclude that if $(G_n)$ is a convergent sequence achieving $I(P_{EENN},\beta)$ for $\beta \in [1/2,1)$, then the densities of the following graphs
\[
 \mathcal{F}=\left\{
\begin{tabular}{ccccc}
 \Fuuuu111122, 
 &
 \Fuuuu112211,
 &
 \Fuuuu112212,
 &
 \Fuuuu112222,
 &
 \Fuuuu122221
 \\
 (a) & (b) &  (c) &(d)& (e) 
\end{tabular}
\right\}
\]
tend to zero since they do not appear as induced subrgaphs of $\mathcal{A} \cup \mathcal{A}_{1/2}\setminus\{C_5\}$.

Let $G$ be a sufficiently large extremal example.
After applying the induced removal lemma to $G$, we get  a graph  $G'$ in edit distance $\varepsilon n^2$ from $G$, which contains none of the graphs in $\mathcal{F}$ as induced subgraphs.

Let $K$ be the vertex set of a maximum clique in $G'$, let $I$ be the set of isolated vertices in $G'$, and let $R=V(G')-K-I$ be the remaining vertices.

\begin{claim}
    There is at most one vertex $k$ in $K$ such that all the vertices $r$ in $R$ are adjacent to all the vertices in $K\setminus\{k\}$ and not adjacent to $k$.
\end{claim}
\begin{proof}
    By the maximality of $K$, each vertex in $R$ must have a non-neighbor in $K$.
    Let $r$ be a vertex in $R$. Suppose for contradiction that $r$ is not adjacent to two vertices $u,v \in K$. Since $r$ is not isolated, it has a neighbor $w$.
    The four vertices $u,v,w,r$ induce one of (b),(c) or (d), giving a contradiction. Hence for each vertex in $r$, there is at most one vertex $u$ in $K$ such that $ur$ is not an edge.
    If $|R|=1$, then  the claim is proved.

    Let $r_1$ and $r_2$ be two distinct vertices in $R$. Suppose for contradiction that $k_1$ and $k_2$ are two distinct vertices in $K$ such that $r_1k_1$ and $r_2k_2$ are not edges. 
    If $r_1r_2$ is an edge, then $r_1r_2k_1k_2$ induce (e) and if $r_1r_2$ is not an edge $r_1r_2k_1k_2$ induce (c). 
    In both cases we obtain a contradiction.
\end{proof}

\begin{claim}
   The  graph induced by $R$ is an independent set.
\end{claim}
\begin{proof}
    If $r_1r_2$ is an edge in $G[R]$, then $K\setminus{k} \cup \{r_1,r_2\}$ is a clique contradicting the maximality of $K$.
\end{proof}

\begin{claim}
    Either $R=\emptyset$ or $I=\emptyset$.
\end{claim}
\begin{proof}
    Suppose for contradiction that $r\in R$ and $i \in I$ exist.
    Let $k$ be a non-neighbor of $r$ in $R$ and $u$ be a neighbor of $r$ in $K$.
    Since $\{r,i,k,u\}$ induces (a), we obtain a contradiction.
\end{proof}
The claims imply the statement of the theorem.
\end{proof}

\section{General trees}\label{sec:generaltrees}

In this section we prove Theorem~\ref{thm:generaltrees}. 
We begin by stating the following result of Reiher and Wagner~\cite{ReiherStar}.

\begin{theorem}  [Reiher-Wagner~\cite{ReiherStar}]\label{RWthm}
    Let $S_k$ be the star with $k$ edges, $\beta \in [0,1]$ and $\eta = 1-\sqrt{1-\beta}$. Then
    $$I(S_k, \beta) \le  \max \{ \beta^{(k+1)/2}, \eta+(1-\eta)\eta^k\}.$$ 
\end{theorem}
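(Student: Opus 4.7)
Writing $d_v$ for the degree of $v$ in an $n$-vertex host $G$, the count of stars is
\[
(\#S_k, G) = \sum_{v \in V(G)} d_v(d_v-1)\cdots(d_v-k+1) = (1+o(1))\sum_v d_v^k.
\]
Normalising by $n^{k+1}$ and setting $x_v = d_v/n$, the task reduces to: maximise $\tfrac{1}{n}\sum_v x_v^k$ subject to $\tfrac{1}{n}\sum x_v = \beta$, $x_v \in [0,1]$, and the sequence $(d_v)$ being graphical. Without graphicality, convexity of $x \mapsto x^k$ would push all mass to the extremes $0$ and $1$, giving only the weak bound $\beta$; the entire content of the theorem is that realizability forces the mass onto one of two specific two-point distributions.

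My plan is to proceed in two steps: (i) reduce to threshold graphs via majorisation, (ii) optimise within the threshold class. For (i), since $x \mapsto x^k$ is strictly convex on $[0,n]$ for $k \ge 2$, Karamata's inequality implies that if two graphs have the same edge count and the sorted degree sequence of the first majorises that of the second, then its value of $\sum d_v^k$ is at least as large. Hence the extremisers of $\sum d_v^k$ live among graphs whose degree sequence is maximal in the majorisation order on graphical sequences. Such maximal sequences are realised precisely by \emph{threshold graphs}: graphs built by iteratively appending either an isolated or a universal vertex. Any non-threshold graph admits a $2$-switch that preserves the edge count while strictly majorising the original degree sequence, so we may assume $G$ is threshold.

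For step (ii), a threshold graph on $n$ vertices is encoded by a binary word $b_1 b_2 \cdots b_n \in \{0,1\}^n$ (append isolated $=0$, append universal $=1$). One writes the density constraint and the objective $\sum d_v^k$ as explicit polynomials in this word and passes to the continuous relaxation (a non-decreasing step function on $[0,1]$). Because $x \mapsto x^k$ is convex and the density constraint is linear, the optimum is attained on the boundary of the relaxed feasible region. The two boundary configurations are $K(\sqrt{\beta}\,n, n)$ (an initial block of $1$'s followed by $0$'s; a clique plus isolated vertices), which delivers $\beta^{(k+1)/2}$, and its complement $\overline{K(\sqrt{1-\beta}\,n, n)}$ (an initial block of $0$'s followed by $1$'s), which delivers $\eta + (1-\eta)\eta^k$ with $\eta = 1-\sqrt{1-\beta}$. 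Taking the maximum of the two values yields the claimed bound.

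The main obstacle is rigorously ruling out interior maximisers in the relaxed problem of step (ii): the parametrisation of threshold words is not quite linear in $b_i$, and one must compute the second variation of the objective under one-parameter deformations of the word and verify that every interior critical point is a saddle point or local minimum. This case analysis is the technical heart of the original Reiher--Wagner argument. For small fixed $k$ an alternative route would be a flag-algebra proof parametrised by $\beta$, in the spirit of the proof of Theorem~\ref{thm:AP4flags} earlier in the paper.
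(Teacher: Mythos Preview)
The paper does not prove this theorem; it is quoted from Reiher--Wagner and used only as a black box, namely as the base case of the induction in the proof of Theorem~\ref{thm:generaltrees}. There is therefore no in-paper argument to compare your proposal against.

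As for your outline itself: the reduction to threshold graphs via Schur-convexity of $\sum_v d_v^k$ is correct and is a natural first move. You also correctly identify that all the content lies in step~(ii), the optimisation over threshold graphs. However, your proposal does not actually execute that step---you describe the parametrisation by binary words and name the two candidate extremisers, then defer the exclusion of interior critical points to ``the technical heart of the original Reiher--Wagner argument.'' So what you have written is a plan, not a proof. One small caution on step~(ii): calling $K(\sqrt\beta\,n,n)$ and its complement ``boundary configurations'' is misleading, since the space of (limits of) threshold graphs is not a polytope in any obvious sense and many other threshold graphs sit on whatever boundary you have in mind; the argument that singles out precisely these two configurations really does require the local/second-variation analysis you allude to, and that analysis is where the entire difficulty lies.
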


We note that the two quantities in the maximum are obtained when $G_n$ is $K(a,n)$ or its complement, for an appropriate $a$; moreover, for $\beta$ sufficiently close to 1, the first term in the maximum, which corresponds to $\rho(S_k, (K(a,n)))$,  is larger.
\bigskip

{\bf Proof of Theorem ~\ref{thm:generaltrees}.}
Recall that we are given a tree $T$ and our aim is to show that there exists $\beta_T \in (0,1)$ such that for $\beta>\beta_T$, the graph sequence $(G_n)$ where $G_n=K((1+o(1))\sqrt \beta \, n, n)$ achieves $I(T, \beta)$.
   We proceed by  induction on the number of edges of $T$. For the base case, we use Theorem~\ref{RWthm} which applies for all stars. For the induction step, let us assume $T$ is not a star. Then there is an edge $e$ of $T$ such that $T-e$ consists of two trees $T_1$ and $T_2$, each with at least one edge. By induction there exist $\beta_{T_1}$ and $\beta_{T_2}$ so let $\beta_T= \max\{\beta_{T_1}, \beta_{T_2}\}$. Consider $G$ with density $\beta>\beta_T$.
     Then clearly $(\#T, G) \le (\#T_1, G) \cdot (\#T_2, G)$. Both of the latter quantities are asymptotically maximized when $G$ is a  clique and isolated vertices by induction. Moreover, for such a choice of $G$ we have $(\#T, G) \ge  (\#T_1, G)\cdot  (\#T_2, G)$  up to lower order terms. Indeed, for every two disjoint copies of $T_1$ and $T_2$ in $G$, and for every two vertices $v_1$ and $v_2$ in $T_1$ and $T_2$, respectively, the pair $v_1v_2$ is adjacent in $G$. An appropriate choice of $v_1$ and $v_2$ yields a copy of $T$. 
\qed

\section{Stars}\label{sec:stars}

In this section we investigate Question~\ref{ques:stars}.
Here, we have edges as red and non-edges as blue. Let $S_{a,b}$ be a star with $a$ edges (red) and $b$ non-edges (blue), where $a \geq b$ and $a \geq 2$. 
We will consider 
\[
S_{2,1} = \Fuuuu443111 
\]
as the main example.

Recall from the introduction that $\mathcal{S}$ is the class of graphs obtained from a regular graph by first adding universal vertices and then adding isolated vertices or by first adding isolated vertices and then adding universal vertices.
The class $\mathcal{S}$ is a combination of several different constructions that seem to appear to be maximizing (or minimizing) $\#S_{a,b}$ for $n$-vertex graphs for some $\beta$.
Here we list these special cases of $\mathcal{S}$ and their  resulting densities. 

    (i)  The disjoint union of $(1-s)n$ isolated vertices and a regular graph $R$ on $sn$ vertices for some $s\in (0,1)$. 
    The graph $R$ has density  $x$,
    resulting in    degree $snx$ for every vertex in $R$.
    This implies $\beta = s^2x$, solving for $x$ yields $x = \beta/s^2$.
    The density of labeled $S_{a,b}$ as a function of $s$ is then
    \[
    f(s) = s(sx)^a(1-sx)^b = s\left( \frac{\beta}{s} \right)^a\left(1-\frac{\beta}{s}\right)^b.
    \]
    By solving  $f'(s)=0$ using Wolfram Alpha\footnote{
    command:\texttt{ 
    derivative by s for s(d/s)\textasciicircum a(1-d/s)\textasciicircum b
    and determine its roots
    }
    }
    we obtain
    \begin{align}\label{eq:sx}
s = \frac{\beta(a+b-1)}{a-1}, \quad x = \frac{(a-1)^2}{\beta(a+b-1)^2}.
    \end{align}
    Since $s < 1$, 
    \eqref{eq:sx} provides an upper bound on $\beta$. Similarly, since $x \leq 1$, \eqref{eq:sx} gives a lower bound on $\beta$. These bounds are
    \[
\beta \in \left[ \frac{(a-1)^2}{(a+b-1)^2}, \frac{a-1}{(a+b-1)}  \right].
    \]
After substitution of $s$ from \eqref{eq:sx} to $f$ we obtain a linear estimate for the density of labeled $S_{a,b}$ as a function of $\beta$
\[
\ell(\beta) := \beta
 \frac{(a - 1)^{a-1}b^b}{(a-1 + b)^{a-1+b}}.
\]

(ii)
A clique of size $(1-s)n$ completely connected to a regular graph $R$ of size $sn$. This is the complement of (i), so the previous formulas give us the density for $S_{b,a}$ for edge density $1-\beta$. 
The density of $S_{a,b}$ as a function of $\beta$ in this case is
\[
\ell{}c(\beta) := (1-\beta)
 \frac{a^a(b-1)^{b-1}}{(a+ b-1)^{a+b-1}}
\quad\  \text{ for }\quad 
\beta \in \left[ 1-\frac{b-1}{(a+b-1)}, 1-\frac{(b-1)^2}{(a+b-1)^2} \right].
\]

(iii) For a regular graph of edge density $\beta$,  the density of labeled $S_{a,b}$ is
\[
r(\beta) := \beta^a (1-\beta)^b.
\]

(iv) A graph $G$ of edge density $\beta$ with the vertex set partitioned into parts $X,Y,Z$, where $G[X,Y]$ is complete bipartite, $G[Y]$ is a clique and there are no other edges. 
The optimal sizes of $X$, $Y$ and $Z$ and the resulting density of labeled $S_{a,b}$ come from  the following polynomial program.
\[
s(\beta) := 
\begin{cases}
\text{maximum} &  xy^a(1-y)^b + y(x+y)^a(1-x-y)^b  \\
\text{subject to} & x+y \leq 1 \\
& 2xy + y^2 = \beta \\
& x,y \geq 0.
\end{cases}
\]
For fixed values of $a$, $b$ and $\beta$ the program has one variable and can be solved using calculus (or more accurately, a computer algebra program). Unfortunately, the solution as a function of $\beta$ is a root of a polynomial expression depending on $a$ and $b$.
This construction has two special cases when $X=\emptyset$ or $Z =\emptyset$ that are easier to handle and seem to appear often.

(ivX)
If $X=\emptyset$ then the clique $Y$ has $\beta^{1/2}n$ vertices and the rest are isolated vertices in $Z$.
The density of labeled $S_{a,b}$ is
\[
c(\beta) := \beta^{1/2} \beta^{a/2}(1-\beta^{1/2})^b.
\]

(ivZ)
 If $Z=\emptyset$ then $Y$ is a set of $(1-(1-\beta)^{1/2})n$ universal vertices and $G[X]$ is an independent set. Notice that this is a complement of (ivX).
 The density of labeled $S_{a,b}$ is
\[
cc(\beta) := (1-\beta)^{1/2} (1-(1-\beta)^{1/2})^a (1-\beta)^{b/2}.
\]

(v)
The complement of construction (iv).
That means a graph $G$ of edge density $\beta$ with the vertex set partitioned into sets $X,Y,Z$ where $Z$ is the set of  universal vertices, $G[X]$ induces a clique and there are no other edges.

\[
cs(\beta) := 
\begin{cases}
\text{maximum} &  xy^b(1-y)^a + y(x+y)^b(1-x-y)^a  \\
\text{subject to} & x+y \leq 1 \\
& 2xy + y^2 = 1-\beta \\
& x,y \geq 0.
\end{cases}
\]

With these constructions, we state a conjecture of the profile for $S_{2,1}$.

\begin{conjecture}\label{conj:S21}
For every $S_{2,1}$-good $(G_n)$
\[
\left.
\begin{array}{ll}
cc(\beta) & \text{if } \beta \in [0,x] \\
c(\beta)  & \text{if } \beta \in [x,1]
\end{array}
\right\}
\leq
\rho(S_{2,1}, (G_n))
 \leq \begin{cases}  
 s(\beta) & \text{if } \beta \in [0,y] \\ 
 c(\beta) & \text{if } \beta \in [y,1/4] \\ 
 \ell(\beta) & \text{if } \beta \in [1/4, 1/2]\\
 r(\beta) & \text{if }  \beta \in [1/2,1],
 \end{cases}
\]
where $x \in (0,1)$ is  a solution to $cc(x) = c(x)$, i.e. $16x^3 -40x^2 +41x-16=0$
and $y \in (0,1/4)$ is a root of a degree 6 polynomial coming from determining the threshold when the optimal solution to $s(\beta)$ happens at $x=0$.
\end{conjecture}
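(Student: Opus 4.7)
The starting point is the identity $\#S_{2,1}(G) = \sum_{v} d_v(d_v-1)(n-1-d_v) = (1+o(1))n^3\sum_v f(d_v/n)$ with $f(x)=x^2(1-x)$. So the whole profile reduces to optimizing $\sum_v f(x_v)$ (where $x_v:=d_v/n$) subject to $\sum x_v\approx\beta n$ and the side condition that $(d_v)$ be a graphical sequence. Since $f$ is convex on $[0,1/3]$ and concave on $[1/3,1]$, Lemma~\ref{opt} applies with $\gamma=1/3$. The plan is to prove the upper envelope by combining Lemma~\ref{opt} with realizability, and to attack the lower envelope via a stability plus flag-algebra argument analogous to the proof of Theorem~\ref{thm:PEENNI}.

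For the upper envelope, the range $\beta\in[1/2,1]$ is immediate from Proposition~\ref{degree_sum}: $\sum d_v^2(n-d_v)\le nd^2(n-d)$ when the average degree $d\ge n/2$, which after normalization yields $\beta^2(1-\beta)=r(\beta)$, with equality forcing asymptotic regularity. For $\beta\in[1/4,1/2]$ I would apply Lemma~\ref{opt} with $\gamma=1/3$ and $D=\beta n$: up to one outlier, the optimizer has $m$ vertices of degree $\alpha n$ and the rest of degree $0$ for some $\alpha\ge 1/3$. The $n-m$ isolated vertices contribute no edges, so all $\tfrac12 m\alpha n$ edges must lie inside the $m$ high-degree vertices, forcing $\alpha n\le m-1$, i.e.\ $\alpha\le\sqrt{\beta}+o(1)$. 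The objective then becomes $\beta\alpha(1-\alpha)$ on $[\max(1/3,\beta),\sqrt{\beta}]$, and for $\beta\in[1/4,1/2]$ this interval contains $\alpha=1/2$, so the supremum is $\beta/4=\ell(\beta)$, attained by an $(n/2)$-regular graph on $2\beta n$ vertices plus $(1-2\beta)n$ isolates, which is exactly construction~(i).

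For $\beta<1/4$ the same two-block analysis formally points to $\alpha=\sqrt\beta$, giving $c(\beta)=\beta^{3/2}(1-\sqrt{\beta})$, but Lemma~\ref{opt} alone is no longer tight: its unconstrained maximum still sits at $\alpha=1/2$ and only produces the bound $\beta/4>c(\beta)$. Matching the conjecture on $[y,1/4]$ requires a realizability-aware certificate, most naturally a parametric sum-of-squares expansion on $5$- or $6$-vertex flags in the spirit of the proof of Theorem~\ref{thm:PEENNI}. On $[0,y]$ the extremal graph is the three-block construction~(iv) and $y$ is a root of a high-degree polynomial, so a uniform symbolic flag certificate seems hard to produce; one would probably resort to numerical verification at each fixed $\beta$, as suggested by the success of the flag-algebra approach in Theorems~\ref{thm:AP4flags} and~\ref{thm:PEENNI}.

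For the lower envelope I would run a stability-type argument: first use an SDP bound to show that every near-minimizer lies within edit distance $\varepsilon n^2$ of (ivZ) or (ivX), and then reduce to a one-parameter optimization inside each neighborhood, which pins down the crossover as the root of $16x^3-40x^2+41x-16=0$. The hardest step throughout is the regime $\beta<1/4$ on both envelopes: degree-sum optimization ceases to be tight there and the extremal structures involve three vertex blocks rather than two, so producing a \emph{uniform}, $\beta$-parametric flag-algebra certificate valid across a whole interval is the main technical obstacle. For any \emph{fixed} $\beta$ I expect a moderate-sized semidefinite computation to close the gap; the challenge is to interpolate these computations into a single closed-form argument, which is what makes a full proof of Conjecture~\ref{conj:S21} currently out of reach.
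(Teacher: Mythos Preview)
The statement is a \emph{conjecture}, and the paper does not prove it in full. What the paper does establish rigorously is the upper bound for $\beta\ge 1/4$: Proposition~\ref{degree_sum} handles $\beta\ge 1/2$, and for $\beta\in[1/4,1/2]$ the paper applies Lemma~\ref{opt} with $f(x)=x^2(1-x)$ and $\gamma=1/3$, finds the unconstrained optimum at $\alpha=1/2$, and then checks that the resulting sequence ($m=2\beta n$ vertices of degree $n/2$, rest isolated) is graphically realizable precisely when $\beta\ge 1/4$. For $\beta<1/4$ and for the entire lower envelope the paper offers only numerical flag-algebra evidence and leaves the conjecture open.

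Your proposal mirrors this exactly. You use the same degree-sum identity, the same application of Proposition~\ref{degree_sum} for $\beta\ge 1/2$, and the same Lemma~\ref{opt} argument for $[1/4,1/2]$; you correctly identify that below $1/4$ the unconstrained Lemma~\ref{opt} bound $\beta/4$ overshoots $c(\beta)$ and that realizability-aware or flag-algebra certificates would be needed; and you honestly flag the full conjecture as currently out of reach. One small presentational point: in your $[1/4,1/2]$ paragraph you phrase the optimization as maximizing $\beta\alpha(1-\alpha)$ over the \emph{realizable} range $[\max(1/3,\beta),\sqrt\beta]$, but Lemma~\ref{opt} only tells you the maximum over \emph{all} sequences is attained at some $X_\alpha$; the clean logic (which the paper uses) is to first maximize over all $\alpha$, see the answer is $\alpha=1/2$, and then observe that this particular $X_{1/2}$ is graphical when $\beta\ge 1/4$. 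Your version reaches the same conclusion because the constrained and unconstrained optima coincide on this range, but it is worth stating that explicitly.

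Your sketched plans for $\beta<1/4$ and for the lower envelope (parametric SOS certificates, stability plus local optimization) go beyond what the paper attempts; they are plausible strategies but, as you acknowledge, not yet proofs.
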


The visualization of the conjecture is in Figure~\ref{fig:S21}. Notice that the upper bound in the conjecture describes $I(S_{2,1},\beta)$.
Since the number of labeled copies of $S_{2,1}$ in an $n$-vertex graph $G=(V,E)$ is upper bounded by 
$\sum_{v \in V} d_v^2\cdot(n-d_v)$,
Proposition~\ref{degree_sum} proves the upper bound from the conjecture for $\beta\ge 1/2$.

The method of the proof of Proposition~\ref{degree_sum} proves the conjecture for $\beta\ge 1/4.$ Indeed, the application of Lemma~\ref{opt} gives as an extremal construction a regular graph plus isolated vertices, which is the construction for $I(S_{2,1},\beta)$ when $\beta \geq 1/4$. 
More precisely, the application of Lemma~\ref{opt}
yields that the maximum for the sum of functions $f(x)=(1-x)x^2$ is achieved if there are $m$ values, each are equal to $\alpha$.
The sum is $m\alpha^2(1-\alpha)=\beta n\alpha(1-\alpha)$
since $m\alpha = \beta n$.\footnote{Here $\beta \in [0,1]$ while in the referenced proof $d \in [0,n]$.}
Since $\alpha(1-\alpha)$ is concave, maximized at $\alpha=1/2$ and $\beta \leq \alpha$, the maximum of the sum is achieved at $\alpha = \max\{1/2,\beta\}$. 
In order for the degree sequence of $m$ entries each of value $\alpha n$ to be realized, we need $m \ge \alpha n$, which is equivalent to the condition $\beta n \geq \alpha^2n$. 
Since $\alpha$ is $\max\{1/2,\beta\}$, the condition holds for $\beta\ge 1/4$.

We used flag algebras for calculating both upper and lower bounds at multiple fixed values of $\beta$ and the results were numerically matching  the conjecture. 
While ${\ell}c(\beta)$ and $cs(\beta)$ are not used in Conjecture~\ref{conj:S21}, we included them for completeness since they are useful for describing $I(S_{a,b},\beta)$ for other values of $a$ and $b$.

\begin{figure}[ht]
\begin{center}
\includegraphics[width=8cm]{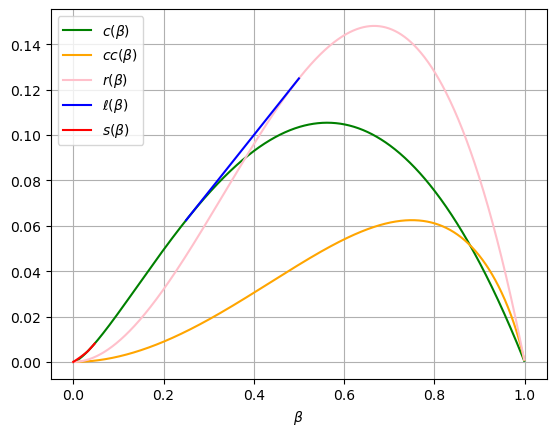}
\includegraphics[width=8cm]{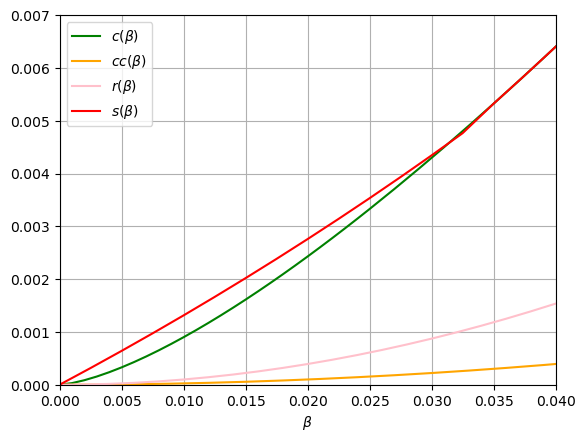}
\end{center}
    \caption{Functions used in Conjecture~\ref{conj:S21}.}
    \label{fig:S21}
\end{figure}

\section{Acknowledgment}
This material is based upon work supported by the National Science Foundation under Grant No. DMS-1928930, while the authors were in residence at the Simons Laufer Mathematical Sciences Institute in Berkeley, California, during the spring of 2025.
The computer calculations were performed mainly on the Alderaan cluster.
The Alderaan cluster was funded by the National Science Foundation Campus Computing program grant OAC-2019089 and a contribution from the Simons Foundation.
It is operated by the Center for Computational Mathematics, Department of Mathematical and Statistical Sciences, College of Liberal Arts and Sciences at the University of Colorado Denver.

\bibliographystyle{plainurl}
\bibliography{references.bib}

\appendix

\section{$P_{EENN}$}\label{app:PEENN}

This part provides the expansion of \eqref{eq:FAPEENN}.
Let $\mathcal{F}_5$ be the set of    
$5$-vertex graphs, up to isomorphism. 
We want to rewrite the right hand side as 
\[
\sum_{F \in \mathcal{F}_5} c_{a,F} F \leq \max_{F \in \mathcal{F}_5} c_{a,F} = 0. 
\]

The values of $c_{a,F} $ are the following polynomials in $a$, $B$ and $C$.

\begin{itemize}
\item[ ]
$\sum_{F \in \mathcal{F}_5} c_{a,F} F$ = 
 $0$ \Fe50{1 1 1 1 1 1 1 1 1 1}
 $+0$ \Fe50{1 1 1 1 1 1 1 1 1 2}
 $+0$ \Fe50{1 1 1 1 1 1 1 2 2 2}
 $+0$ \Fe50{1 1 1 1 2 2 2 2 2 2}
 $+0$ \Fe50{2 2 2 2 2 2 2 2 2 2}
\item[ ]
 $+\left(-12 \, a^{8} + 12 \, a^{7} + 4 \, a^{6} - 4 \, a^{5} + 12 \, a^{4} - 1.5 \, B a^{2} - 16 \, a^{3} + 4 \, a^{2}\right)$ \Fe50{1 1 1 1 1 1 1 1 2 2}
\item[ ] 
 $+\left(-36 \, a^{8} + 36 \, a^{7} + 12 \, a^{6} - 12 \, a^{5} + 18 \, a^{4} - 4.5 \, B a^{2} - 30 \, a^{3} + 12 \, a^{2}\right)$ \Fe50{1 1 1 1 1 1 2 1 2 2}
\item[ ]
 $+\left(-72 \, a^{8} + 72 \, a^{7} + 24 \, a^{6} - 24 \, a^{5} + 24 \, a^{4} - 9 \, B a^{2} + 6 \, C a^{2} - 48 \, a^{3} - 12 \, C a + 24 \, a^{2} + 6 \, C\right)$ \Fe50{1 1 1 2 1 1 2 1 2 2}
\item[ ]
 $+\left(4 \, a^{2} - 4 \, a\right)$ \Fe50{1 1 1 1 1 1 2 2 1 1}
\item[ ]
 $+\left(-24 \, a^{8} + 24 \, a^{7} + 8 \, a^{6} - 8 \, a^{5} + 12 \, a^{4} - 3 \, B a^{2} - 18 \, a^{3} + 10 \, a^{2} - 4 \, a\right)$ \Fe50{1 1 1 1 1 1 2 2 1 2}
\item[ ]
 $+\left(-24 \, a^{8} + 24 \, a^{7} + 8 \, a^{6} - 8 \, a^{5} + 12 \, a^{4} - 3 \, B a^{2} - 26 \, a^{3} + 18 \, a^{2} - 4 \, a\right)$ \Fe50{1 1 1 1 1 1 2 2 2 2}
\item[ ]
 $+\left(-12 \, a^{8} + 12 \, a^{7} + 16 \, a^{6} - 16 \, a^{5} + 2 \, a^{4} - 1.5 \, B a^{2} - 2 \, a^{3} + 4 \, a^{2} + 1.5 \, B - 4 \, a\right)$ \Fe50{1 1 1 2 1 1 2 2 1 1}
\item[ ]
 $+\left(-48 \, a^{8} + 48 \, a^{7} + 28 \, a^{6} - 28 \, a^{5} + 12 \, a^{4} - 6 \, B a^{2} - 20 \, a^{3} + 12 \, a^{2} + 1.5 \, B - 4 \, a\right)$ \Fe50{1 1 1 2 1 1 2 2 1 2}
\item[ ]
 $+\left(-60 \, a^{8} + 60 \, a^{7} + 32 \, a^{6} - 32 \, a^{5} + 16 \, a^{4} - 7.5 \, B a^{2} + C a^{2} - 36 \, a^{3} - 2 \, C a + 24 \, a^{2} + 1.5 \, B + C - 4 \, a\right)$ \Fe50{1 1 1 2 1 1 2 2 2 2}
\item[ ]
 $+\left(-48 \, a^{8} + 48 \, a^{7} + 16 \, a^{6} - 16 \, a^{5} + 16 \, a^{4} - 6 \, B a^{2} - 24 \, a^{3} + 16 \, a^{2} - 8 \, a\right)$ \Fe50{1 1 1 1 1 2 2 2 2 1}
\item[ ]
 $+\left(-24 \, a^{8} + 24 \, a^{7} + 8 \, a^{6} - 8 \, a^{5} + 12 \, a^{4} - 3 \, B a^{2} - 28 \, a^{3} + 22 \, a^{2} - 6 \, a\right)$ \Fe50{1 1 1 1 1 2 2 2 2 2}
\item[ ]
 $+\left(-36 \, a^{8} + 36 \, a^{7} + 36 \, a^{6} - 36 \, a^{5} + 2 \, a^{4} - 4.5 \, B a^{2} - 4 \, a^{3} + 6 \, a^{2} + 3 \, B - 4 \, a\right)$ \Fe50{1 1 1 2 1 2 1 2 2 1}
\item[ ]
 $+\left(-48 \, a^{8} + 48 \, a^{7} + 40 \, a^{6} - 40 \, a^{5} + 8 \, a^{4} - 6 \, B a^{2} - 20 \, a^{3} + 16 \, a^{2} + 3 \, B - 4 \, a\right)$ \Fe50{1 1 1 2 1 2 1 2 2 2}
\item[ ]
 $+\left(-72 \, a^{8} + 72 \, a^{7} + 60 \, a^{6} - 60 \, a^{5} + 6 \, a^{4} - 9 \, B a^{2} - 10 \, a^{3} + 8 \, a^{2} + 4.5 \, B - 4 \, a\right)$ \Fe50{1 1 1 2 1 2 2 2 2 1}
\item[ ]
 $+\left(-60 \, a^{8} + 60 \, a^{7} + 56 \, a^{6} - 56 \, a^{5} + 8 \, a^{4} - 7.5 \, B a^{2} + C a^{2} - 24 \, a^{3} - C a + 20 \, a^{2} + 4.5 \, B - 4 \, a\right)$ \Fe50{1 1 1 2 1 2 2 2 2 2}
\item[ ]
 $+\left(-108 \, a^{8} + 108 \, a^{7} + 108 \, a^{6} - 108 \, a^{5} - 13.5 \, B a^{2} + 9 \, B\right)$ \Fe50{1 1 2 2 1 2 2 2 2 1}
\item[ ]
 $+\left(-72 \, a^{8} + 72 \, a^{7} + 96 \, a^{6} - 96 \, a^{5} - 9 \, B a^{2} + 6 \, C a^{2} - 12 \, a^{3} - 6 \, C a + 12 \, a^{2} + 9 \, B\right)$ \Fe50{1 1 2 2 1 2 2 2 2 2}
\item[ ]
 $+\left(6 \, a^{4} - 6 \, a^{3} + 6 \, a^{2} - 6 \, a\right)$ \Fe50{1 1 1 2 2 2 1 2 1 1}
\item[ ]
 $+\left(-36 \, a^{8} + 36 \, a^{7} + 24 \, a^{6} - 24 \, a^{5} + 8 \, a^{4} - 4.5 \, B a^{2} - 12 \, a^{3} + 10 \, a^{2} + 1.5 \, B - 6 \, a\right)$ \Fe50{1 1 1 2 2 2 1 2 1 2}
\item[ ]
 $+\left(-48 \, a^{8} + 48 \, a^{7} + 40 \, a^{6} - 40 \, a^{5} + 8 \, a^{4} - 6 \, B a^{2} - 16 \, a^{3} + 14 \, a^{2} + 3 \, B - 6 \, a\right)$ \Fe50{1 1 1 2 2 2 1 2 2 2}
\item[ ]
 $+\left(-36 \, a^{8} + 36 \, a^{7} + 48 \, a^{6} - 48 \, a^{5} + 6 \, a^{4} - 4.5 \, B a^{2} - 18 \, a^{3} + 18 \, a^{2} + 4.5 \, B - 6 \, a\right)$ \Fe50{1 1 1 2 2 2 2 2 2 2}
\item[ ]
 $+\left(-48 \, a^{8} + 48 \, a^{7} + 16 \, a^{6} - 16 \, a^{5} + 16 \, a^{4} - 6 \, B a^{2} - 24 \, a^{3} + 16 \, a^{2} - 8 \, a\right)$ \Fe50{1 1 2 2 2 1 2 1 2 2}
\item[ ]
 $+\left(-60 \, a^{8} + 60 \, a^{7} + 80 \, a^{6} - 80 \, a^{5} - 10 \, a^{4} - 7.5 \, B a^{2} + 10 \, a^{3} + 7.5 \, B\right)$ \Fe50{1 1 2 2 2 1 2 2 1 1}
\item[ ]
 $+\left(-72 \, a^{8} + 72 \, a^{7} + 72 \, a^{6} - 72 \, a^{5} - 9 \, B a^{2} - 2 \, a^{3} + 4 \, a^{2} + 6 \, B - 2 \, a\right)$ \Fe50{1 1 2 2 2 1 2 2 1 2}
\item[ ]
 $+\left(-60 \, a^{8} + 60 \, a^{7} + 56 \, a^{6} - 56 \, a^{5} + 6 \, a^{4} - 7.5 \, B a^{2} - 12 \, a^{3} + 10 \, a^{2} + 4.5 \, B - 4 \, a\right)$ \Fe50{1 1 2 2 2 1 2 2 2 2}
\item[ ]
 $+\left(-84 \, a^{8} + 84 \, a^{7} + 88 \, a^{6} - 88 \, a^{5} - 2 \, a^{4} - 10.5 \, B a^{2} + 2 \, a^{3} + 7.5 \, B\right)$ \Fe50{1 1 2 2 2 2 2 2 2 1}
\item[ ]
 $+\left(-48 \, a^{8} + 48 \, a^{7} + 64 \, a^{6} - 64 \, a^{5} + 2 \, a^{4} - 6 \, B a^{2} - 6 \, a^{3} + 6 \, a^{2} + 6 \, B - 2 \, a\right)$ \Fe50{1 1 2 2 2 2 2 2 2 2}
\item[ ]
 $+\left(-72 \, a^{8} + 72 \, a^{7} + 72 \, a^{6} - 72 \, a^{5} - 9 \, B a^{2} + 2 \, C a^{2} + 6 \, B\right)$ \Fe50{1 2 2 2 2 2 2 1 2 2}
\item[ ]
 $+\left(-36 \, a^{8} + 36 \, a^{7} + 48 \, a^{6} - 48 \, a^{5} - 4.5 \, B a^{2} + 3 \, C a^{2} + 4.5 \, B\right)$ \Fe50{1 2 2 2 2 2 2 2 2 2}
\end{itemize}

Plots of non-zero polynomials $c_{a,F}$ are depicted in Figure~\ref{fig:PEENNbluered} for both options of values of $B$ and $C$.
\begin{figure}
\begin{center}
\includegraphics[width=8cm]{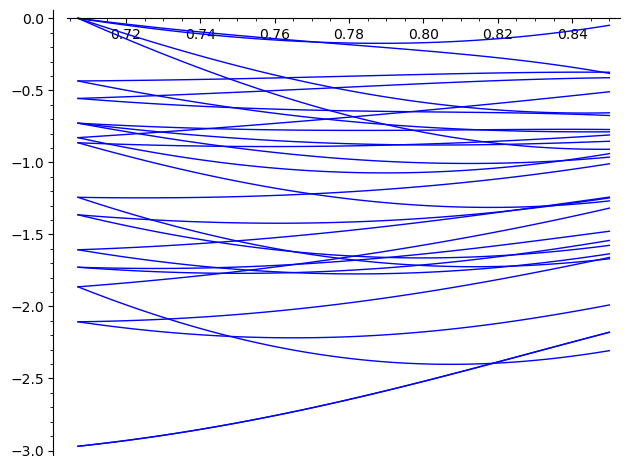}
\includegraphics[width=8cm]{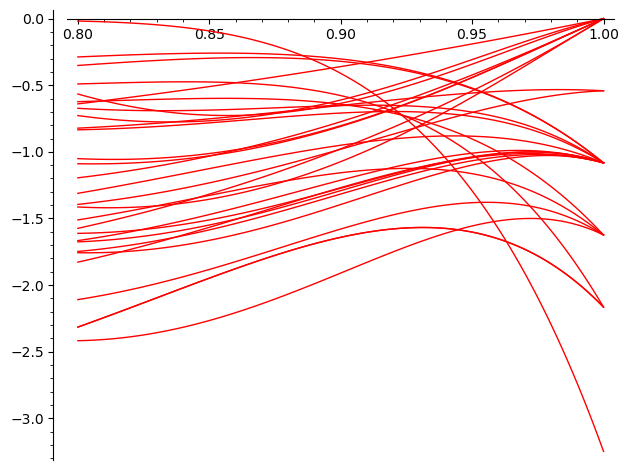}
\end{center}
    \caption{Case $B=C=\sqrt{2}-1$ on the left and on the right $B=0.361$ and $C=0$.} 
    \label{fig:PEENNbluered}
\end{figure}

\end{document}